\documentclass[11pt,reqno]{amsart}
\usepackage{amsmath,amsthm,amssymb}

\newtheorem{theorem}{Theorem}
\newtheorem{lemma}[theorem]{Lemma}
\newtheorem{proposition}[theorem]{Proposition}

\theoremstyle{definition}

\newtheorem{definition}[theorem]{Definition}

\newtheorem{remark}[theorem]{Remark}

\begin{document}

\title[On a Theorem of Wang for Complex Homogeneous Manifolds]{On a Theorem of Wang for Complex Homogeneous Manifolds}

\author[D. N. Pham]{David N. Pham}
\address{Department of Mathematics $\&$ Computer Science, QCC CUNY, Bayside, NY 11364}
\email{dnpham@qcc.cuny.edu}
\date{}

\begin{abstract}
In \cite{Wang1954}, Wang proved (among other things) a sufficiency result for a compact homogeneous manifold $G/H$ to admit a $G$-invariant complex structure.  In this note, we give a new Lie theoretic proof of Wang's theorem which relies on nothing more than the familiar properties of the root space decomposition of a compact Lie group.  It should be noted that the recent work of Ni and Wallach \cite{NiWallach2025} also revisits the aforementioned theorem of Wang (and others) and offers new Lie theoretic proofs as well.  However, the approach of \cite{NiWallach2025} relies on such objects as Borel subalgebras, parabolic subalgebras, and Iwasawa decomposition which may be somewhat less familiar to the working differential geometer. 
\end{abstract}

\subjclass[2020]{53C30 (primary); 32M10, 22E15 (secondary)}

\keywords{homogeneous reductive manifolds, homogeneous complex structures, compact Lie groups}

\maketitle

\section{Introduction}
In \cite{Wang1954}, Wang studied complex structures on compact homogeneous manifolds and proved a number of important results.  Among those results, he proved the following sufficiency condition for the existence of a homogeneous, that is, invariant, complex structure on a homogeneous manifold:
\begin{theorem}[Wang, 1954]
\label{thmWang}
Let $G$ be a compact semisimple Lie group and $H$ a closed connected subgroup such that the homogeneous manifold $G/H$ is even dimensional.  If the semisimple part of $H$ coincides with the semisimple part of the centralizer of some toral subgroup of $G$, then $G/H$ admits a homogeneous complex structure.
\end{theorem}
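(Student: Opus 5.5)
Throughout, the strategy is to realize an invariant complex structure on $G/H$ as an $\mathrm{Ad}(H)$-invariant complex subalgebra $\mathfrak{q}\subset\mathfrak{g}^{\mathbb{C}}$ with $\mathfrak{q}+\overline{\mathfrak{q}}=\mathfrak{g}^{\mathbb{C}}$ and $\mathfrak{q}\cap\overline{\mathfrak{q}}=\mathfrak{h}^{\mathbb{C}}$. This is the standard Koszul/Frölicher description: integrability corresponds to $\mathfrak{q}$ being a subalgebra, and since $H$ is connected, $\mathrm{Ad}(H)$-invariance reduces to $[\mathfrak{h},\mathfrak{q}]\subset\mathfrak{q}$. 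Write $\mathfrak{g}=\mathrm{Lie}(G)$ and let $S\subset G$ be the given torus with centralizer $C=Z_G(S)$, so that $\mathfrak{c}=\mathfrak{z}(\mathfrak{c})\oplus[\mathfrak{c},\mathfrak{c}]$. By hypothesis $[\mathfrak{h},\mathfrak{h}]=[\mathfrak{c},\mathfrak{c}]$, and therefore $\mathfrak{h}=\mathfrak{z}(\mathfrak{h})\oplus[\mathfrak{c},\mathfrak{c}]$.

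The first step is to place $\mathfrak{h}$ inside $\mathfrak{c}$. The center $\mathfrak{z}(\mathfrak{h})$ centralizes $[\mathfrak{c},\mathfrak{c}]$. I would choose a maximal torus $\mathfrak{t}\supset\mathfrak{s}$ of $\mathfrak{g}$ such that $\mathfrak{t}$ also contains $\mathfrak{z}(\mathfrak{h})$. To justify this, note that $\mathfrak{z}(\mathfrak{h})+\mathfrak{z}(\mathfrak{c})$ is abelian: both summands centralize $[\mathfrak{c},\mathfrak{c}]$, and $\mathfrak{z}(\mathfrak{h})$ commutes with $\mathfrak{s}$ after conjugating within the centralizer of $[\mathfrak{c},\mathfrak{c}]$. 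This is the delicate point. A cleaner route is to replace $S$ by the torus $S'$ whose Lie algebra is $\mathfrak{z}(\mathfrak{h})$ together with the center of the centralizer of $[\mathfrak{c},\mathfrak{c}]$ in a suitable maximal torus. One then checks that $[\mathfrak{z}_{\mathfrak g}(\mathfrak{s}'),\mathfrak{z}_{\mathfrak g}(\mathfrak{s}')]=[\mathfrak{c},\mathfrak{c}]$ still holds. With this choice, $\mathfrak{h}=\mathfrak{t}_1\oplus[\mathfrak{c},\mathfrak{c}]$ with $\mathfrak{t}_1\subset\mathfrak{z}(\mathfrak{c})\subset\mathfrak{t}$, and $\mathfrak{c}=\mathfrak{t}\oplus\bigoplus_{\alpha\in R_0}\mathfrak{g}_\alpha$ (taken in $\mathfrak{g}^{\mathbb C}$), where $R_0$ is the set of roots vanishing on $\mathfrak{s}$. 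I expect this reduction, arranging that $\mathfrak{z}(\mathfrak{h})\subset\mathfrak{z}(\mathfrak{c})$ for a suitable toral subgroup, to be the main obstacle. I would first try to handle it by taking $\mathfrak{t}$ to be a maximal abelian subalgebra containing $\mathfrak{z}(\mathfrak{h})$ and a Cartan subalgebra of $[\mathfrak{c},\mathfrak{c}]$. I would then define $\mathfrak{s}'$ as the centralizer of $[\mathfrak{c},\mathfrak{c}]$ in $\mathfrak{t}$, which is a torus containing $\mathfrak{z}(\mathfrak{h})$. Its centralizer has derived algebra generated by the root spaces of roots vanishing on $\mathfrak{s}'$, and these are exactly the roots of $[\mathfrak{c},\mathfrak{c}]$.

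Next, choose an element $X\in\mathfrak{s}$ that is generic, so that $\alpha(X)\neq0$ for all $\alpha\notin R_0$, and put $R^+=\{\alpha:\,i\alpha(X)>0\}$. Then $R^+$ is closed under addition within the root system, and $R^+\cap -R^+=\emptyset$. Every root lies in exactly one of $R_0$, $R^+$, $-R^+$. Since $\dim G/H$ is even and $\mathfrak{t}/\mathfrak{t}_1$ is the toral part of $\mathfrak{g}/\mathfrak{h}$, the dimension $\dim\mathfrak{t}-\dim\mathfrak{t}_1$ is even. I would choose a complex structure $J$ on a complement $\mathfrak{a}$ of $\mathfrak{t}_1$ in $\mathfrak{t}$, and let $\mathfrak{a}^{1,0}$ denote its $+i$ eigenspace in $\mathfrak{a}^{\mathbb C}$. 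Then define
\[
\mathfrak{q}=\mathfrak{h}^{\mathbb C}\oplus\mathfrak{a}^{1,0}\oplus\bigoplus_{\alpha\in R^+}\mathfrak{g}_\alpha .
\]

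Finally, the properties of $\mathfrak{q}$ are checked from root space facts alone, namely $[\mathfrak{g}_\alpha,\mathfrak{g}_\beta]\subset\mathfrak{g}_{\alpha+\beta}$ and $\overline{\mathfrak{g}_\alpha}=\mathfrak{g}_{-\alpha}$.
\begin{itemize}
\item Because $\overline{\mathfrak{g}_\alpha}=\mathfrak{g}_{-\alpha}$, we get $\mathfrak{q}\cap\overline{\mathfrak{q}}=\mathfrak{h}^{\mathbb C}$ and $\mathfrak{q}+\overline{\mathfrak{q}}=\mathfrak{g}^{\mathbb C}$.
\item The abelian part $\mathfrak{t}^{\mathbb C}$ preserves each root space, and $\mathfrak{a}^{1,0}$ brackets trivially with $\mathfrak{t}^{\mathbb C}$.
\item For $\alpha\in R_0$ and $\beta\in R^+$, the value $i(\alpha+\beta)(X)=i\beta(X)>0$, so $\alpha+\beta\in R^+$ whenever it is a root.
\item For $\alpha,\beta\in R^+$, the sum $\alpha+\beta$ is in $R^+$ whenever it is a root, and in particular it is never $0$.
\item Brackets inside $\mathfrak{g}_{R_0}$ stay in $[\mathfrak{c},\mathfrak{c}]^{\mathbb C}+\mathfrak{t}^{\mathbb C}$. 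Here $[\mathfrak{g}_\alpha,\mathfrak{g}_{-\alpha}]$ lies in the span of the coroots $h_\alpha$ with $\alpha\in R_0$, which is contained in $[\mathfrak{c},\mathfrak{c}]^{\mathbb C}\subset\mathfrak{h}^{\mathbb C}$.
\end{itemize}
Together these show that $\mathfrak{q}$ is a subalgebra normalized by $\mathfrak{h}$, and this yields the invariant integrable complex structure on $G/H$.
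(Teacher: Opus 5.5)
Your overall architecture is sound. Once the reduction is in place it is the paper's construction: your $\mathfrak{q}$ is exactly $\mathfrak{h}_{\mathbb C}\oplus\mathfrak{m}^+$. Your choice of $R^+$ from a generic $X\in\mathfrak{s}$ alone is even a little cleaner than the paper's perturbation $X_0=X_s+\epsilon X_h$, since roots of $\mathfrak{h}'$ vanish on $\mathfrak{s}$.

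\textbf{The gap is in the reduction step.} The concrete route you commit to there fails. You take an arbitrary maximal abelian $\mathfrak{t}$ containing $\mathfrak{z}(\mathfrak{h})$ and a Cartan subalgebra of $\mathfrak{k}:=[\mathfrak{c},\mathfrak{c}]$, and set $\mathfrak{s}'=Z_{\mathfrak t}(\mathfrak{k})$. You then assert that the roots vanishing on $\mathfrak{s}'$ are exactly the roots of $\mathfrak{k}$. But such a $\mathfrak{t}$ need not normalize $\mathfrak{k}$, and then $Z_{\mathfrak t}(\mathfrak{k})$ is too small.

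Here is a counterexample. Let $\mathfrak{g}=\mathfrak{so}(7)$, $R_{ij}:=E_{ij}-E_{ji}$, and $\mathfrak{s}=\mathrm{span}\{R_{12},R_{34}\}$. Then $\mathfrak{c}=\mathfrak{so}(2)\oplus\mathfrak{so}(2)\oplus\mathfrak{so}(3)$ and $\mathfrak{k}$ is the $\mathfrak{so}(3)$ acting on $e_5,e_6,e_7$. So $H=SO(3)$ satisfies all the hypotheses, with $\dim G/H=18$.
\begin{itemize}
\item The subalgebra $\mathfrak{t}=\mathrm{span}\{R_{12},R_{37},R_{56}\}$ is maximal abelian, and it contains $\mathfrak{z}(\mathfrak{h})=0$ and the Cartan subalgebra $\mathbb{R}R_{56}$ of $\mathfrak{k}$.
\item However, $[aR_{12}+bR_{37}+cR_{56},R_{67}]=-bR_{36}+cR_{57}$, so $Z_{\mathfrak t}(\mathfrak{k})=\mathbb{R}R_{12}$.
\item The centralizer of $\mathbb{R}R_{12}$ is $\mathfrak{so}(2)\oplus\mathfrak{so}(5)$, whose derived algebra is $\mathfrak{so}(5)\neq\mathfrak{k}$.
\end{itemize}
The underlying reason is that $Z_{\mathfrak g}(\mathfrak{t}\cap\mathfrak{k})$ (here $\mathfrak{so}(2)\oplus\mathfrak{so}(5)$) can be much larger than $(\mathfrak{t}\cap\mathfrak{k})\oplus Z_{\mathfrak g}(\mathfrak{k})$ (here $\mathfrak{so}(2)\oplus\mathfrak{so}(4)$). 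Maximal tori of the former need not normalize $\mathfrak{k}$.

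\textbf{What is missing} is precisely the idea you mentioned and then set aside: conjugating inside the centralizer of $\mathfrak{k}$. You need $\mathfrak{t}\cap Z_{\mathfrak g}(\mathfrak{k})$ to be a \emph{maximal} abelian subalgebra $\mathfrak{u}$ of $Z_{\mathfrak g}(\mathfrak{k})$ containing $\mathfrak{z}(\mathfrak{h})$. You also need a proof that $\mathfrak{u}\oplus(\mathfrak{t}\cap\mathfrak{k})$ is then maximal abelian in $\mathfrak{g}$, with $[Z_{\mathfrak g}(\mathfrak{u}),Z_{\mathfrak g}(\mathfrak{u})]=\mathfrak{k}$. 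The paper gets this in two steps.
\begin{itemize}
\item It shows that $\mathfrak{z}(\mathfrak{c})$ is itself maximal abelian in $Z_{\mathfrak g}(\mathfrak{k})$. An element of $Z_{\mathfrak g}(\mathfrak{k})$ commuting with $\mathfrak{z}(\mathfrak{c})$ centralizes $\mathfrak{c}\supset\mathfrak{s}$, hence lies in $\mathfrak{c}$. Its $\mathfrak{k}$-component is central in the semisimple $\mathfrak{k}$, hence zero.
\item Starting from a maximal abelian $\mathfrak{t}_0\supset\mathfrak{s}$, it uses conjugacy of maximal tori in (the identity component of) the compact group $Z_G(H')$ to find $g$ with $\mathrm{Ad}_g\mathfrak{z}(\mathfrak{c})=\mathfrak{u}$.
\end{itemize}
Because $g$ centralizes $H'$, $\mathrm{Ad}_g$ fixes $\mathfrak{k}$ pointwise. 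So $\mathrm{Ad}_g\mathfrak{s}$ has the same derived centralizer $\mathfrak{k}$, and $\mathrm{Ad}_g\mathfrak{t}_0=\mathfrak{u}\oplus(\mathfrak{t}_0\cap\mathfrak{k})$ is a maximal torus containing both it and $\mathfrak{z}(\mathfrak{h})$. Without this step, the inclusion $\mathfrak{z}(\mathfrak{h})\subset\mathfrak{z}(\mathfrak{c})$ that everything downstream relies on is unproved.

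\textbf{A smaller slip to fix.} $\dim\mathfrak{t}-\dim\mathfrak{t}_1$ need not be even; it is $3$ in the example, where $\mathfrak{t}_1=0$. The even quantity is $\dim\mathfrak{t}-\dim(\mathfrak{t}\cap\mathfrak{h})$. Accordingly, $\mathfrak{a}$ must be a complement of $\mathfrak{t}\cap\mathfrak{h}=\mathfrak{t}_1\oplus(\mathfrak{t}\cap\mathfrak{k})$ rather than of $\mathfrak{t}_1$; otherwise the dimension count for $\mathfrak{q}$ is wrong.
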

\noindent Samelson in \cite{Samelson1953} proved an analogous result for the special case of Lie groups by showing that every compact Lie group of even dimension admits a left-invariant complex structure.  Wang's 1954 paper is a classic in the differential geometry literature.  However, it is not an entirely easy paper to digest.  The clearest sign of this is the recent paper by Ni and Wallach \cite{NiWallach2025} who proved a number of interesting Lie theoretic results and then applied those results to recover the theorem of Wang (as well as others).  For a Lie theorist, the paper of Ni and Wallach uses fairly standard machinery.  However, for the working differential geometer who is not as well versed in Lie theory or representation theory, the machinery of \cite{NiWallach2025} could appear somewhat daunting especially if one is only interested in understanding the above theorem of Wang.  In \cite{NiWallach2025}, one encounters such objects as Borel subalgebras, parabolic subalgebras, and Iwasawa decomposition.  For example, the celebrated books of Kobayashi and Nomizu \cite{KobayashiNomizu1963, KobayashiNomizu1969}, long viewed as the ``bible of differential geometry" make no mention of the aforementioned objects.  Similarly, these objects do not appear in other standard textbooks on differential geometry and complex geometry such as \cite{Lee2013, Huybrechts2005, Voisin2002}.  For this reason, there is certainly some benefit in presenting a new Lie theoretic proof that avoids all mention of these objects, thus making the proof more accessible to the working differential geometer.  

In the current paper, we prove the above theorem of Wang using basic Lie theory. More precisely, we derive the theorem of Wang from scratch using nothing more than the familiar properties of the root space decomposition of a compact Lie group.  We do not prove any new results.  The paper is purely expository and its main benefit is from the pedagogical perspective.  

The rest of the paper is organized as follows.  In Section \ref{secBackground}, we briefly review the essentials needed for the proof of Theorem \ref{thmWang}, namely homogeneous reductive manifolds and its algebraic description at the Lie algebra level as well as the root space decomposition of a compact Lie group.  In Section \ref{secTheProof}, we derive the theorem of Wang using nothing more than the root space decomposition of a compact Lie group.  

\section{Preliminaries}
\label{secBackground}
\subsection{Complex structures on Homogeneous Reductive Manifolds}
In this section, we review some standard results at the intersection of homogeneous reductive manifolds and complex geometry.  We do not opt for full generality here.  Our goal is just to review the bare essentials needed for the proof of Theorem \ref{thmWang}.  The reader interested in full proofs and going well beyond what is covered here is referred to the following literature \cite{Arvanitoyeorgos2003, Helgason1978, KobayashiNomizu1969, Nomizu1954, Huybrechts2005, BorelHirzebruch1958}.

Let $G$ be a compact semisimple Lie group and $H$ a connected closed subgroup of $G$.  Let $B$ denote the Killing form of $\mathfrak{g}:=\mbox{Lie}(G)$ which we recall is defined as 
$$
B(X,Y):=\mbox{tr}(\mbox{ad}_X\circ \mbox{ad}_Y),\hspace*{0.2in} \forall~X,Y\in \mathfrak{g},
$$
where $\mbox{ad}_XY:=[X,Y]$.  The Killing form is symmetric and $\mbox{Ad}$-invariant:
$$
B(\mbox{Ad}_g X,\mbox{Ad}_gY)=B(X,Y).
$$
Differentiating shows that $B$ is also $\mbox{ad}$-invariant:
$$
B(\mbox{ad}_XY,Z)+B(Y,\mbox{ad}_XZ)=0,
$$
The condition that $G$ is semisimple is equivalent to the condition that $B$ is nondegenerate.  Actually, since $G$ is also compact, one can show that B is also negative definite:
$$
B(X,X)<0,\qquad \forall~X\in \mathfrak{g}-\{0\}.
$$
Let $\mathfrak{m}\subset \mathfrak{g}$ be the orthogonal complement of $\mathfrak{h}:=\mbox{Lie}(H)$:
$$
\mathfrak{m}:=\mathfrak{h}^\perp:=\{X\in \mathfrak{g}~|~B(X,\mathfrak{h})=0\}.
$$
The decomposition $\mathfrak{g}=\mathfrak{h}\oplus \mathfrak{m}$ satisfies
\begin{equation}
\label{eqReductiveDecomp}
[\mathfrak{h},\mathfrak{m}]\subset \mathfrak{m}
\end{equation}
This is an immediate consequence of the $\mbox{ad}$-invariance of $B$.  Indeed, for $X,Y\in \mathfrak{h}$ and $Z\in \mathfrak{m}$, we have
\begin{align*}
0&=B([X,Y],Z)+B(Y,[X,Z])\\
&=0+B(Y,[X,Z])\\
&=B(Y,[X,Z]),
\end{align*}
where the second equality follows from the fact that $\mathfrak{h}$ is a Lie subalgebra.  Hence, we see that $[X,Z]\in \mathfrak{m}:=\mathfrak{h}^\perp$.  Condition (\ref{eqReductiveDecomp}) means that $G/H$ is a \textit{reductive homogeneous manifold} and the decomposition $\mathfrak{g}=\mathfrak{h}\oplus \mathfrak{m}$ is often called a \textit{reductive decomposition}.  To be somewhat more precise, one says that $G/H$ is a reductive homogeneous manifold if 
\begin{equation}
\label{eqReductiveDecomp2}
\mbox{Ad}_h \mathfrak{m}\subset \mathfrak{m},\qquad \forall~h\in H.
\end{equation}
By differentiating, we see that condition (\ref{eqReductiveDecomp2}) implies (\ref{eqReductiveDecomp}).  However, as we are assuming that $H$ is connected, conditions (\ref{eqReductiveDecomp}) and (\ref{eqReductiveDecomp2}) are actually equivalent in this case.

If we concern ourselves only with homogeneous, that is, $G$-invariant, structures on $G/H$, then everything can be formulated at the Lie algebra level using the reductive decomposition.  This is entirely analogous to restricting one's attention to left-invariant structures on a Lie group and reducing everything to linear algebra.  This is the beauty of homogeneous manifolds.  Everything can be reduced to pure algebra if one is only interested in $G$-invariant structures.

Let $\pi: G\rightarrow G/H$ be the quotient map.  Notation wise, we write 
$$
[g]:=\pi(g)=gH.
$$
Additionally, denote the left $G$-action on $G/H$ by
$$
\tau: G\times G/H\rightarrow G/H,\qquad \tau_g([x]):=\tau(g,[x])=gxH=[gx]
$$
Suppose that $\mathcal{J}$ is a homogeneous almost complex structure on $G/H$.  Then $\mathcal{J}^2=-\mbox{id}$ and 
\begin{equation}
\nonumber
\mathcal{J}\circ d\tau_g=d\tau_g\circ \mathcal{J}.
\end{equation}
The above condition implies that $\mathcal{J}$ is completely determined by its value at the identity coset $[e]$ in much the same way that a left-invariant almost complex structure is determined by its value at the identity element of the group.  Hence, all of the information about $\mathcal{J}$ is encoded in the following linear map:
$$
\mathcal{J}_{[e]}: T_{[e]}(G/H)\rightarrow  T_{[e]}(G/H).
$$
The quotient map $\pi$ together with the reductive decomposition $\mathfrak{g}=\mathfrak{h}\oplus \mathfrak{m}$ gives the natural identification 
$$
d\pi|_{\mathfrak{m}}: \mathfrak{m}\stackrel{\sim}{\longrightarrow} T_{[e]}(G/H).
$$
From $\mathcal{J}$, one obtains a linear map $J:\mathfrak{m}\rightarrow \mathfrak{m}$ which is uniquely defined by
\begin{equation}
\label{eqDefJ}
d\pi \circ J=\mathcal{J}\circ d\pi\big|_{\mathfrak{m}}.
\end{equation}
From the definition  of $J$, one easily sees that 
\begin{equation}
\label{eqJsq}
J^2=-\mbox{id}_{\mathfrak{m}}.
\end{equation}  
The $G$-invariance of $\mathcal{J}$ implies that 
\begin{equation}
\label{eqAdJ}
\mbox{Ad}_h\circ J=J\circ \mbox{Ad}_h,\qquad \forall~h\in H.
\end{equation}
Differentiating, we have
\begin{equation}
\label{eqadJ}
\mbox{ad}_X\circ J=J\circ \mbox{ad}_X,\qquad \forall~X\in \mathfrak{h}.
\end{equation}
Once again, as we are assuming that $H$ is connected, it follows that equations (\ref{eqAdJ}) and (\ref{eqadJ}) are equivalent.   Conversely, one can show that if we start with a linear map $J:\mathfrak{m}\rightarrow \mathfrak{m}$ which satisfies (\ref{eqJsq}) and (\ref{eqadJ}) (for the case where $H$ is connected), one obtains a homogeneous almost complex structure on $G/H$ by defining $\mathcal{J}$ via (\ref{eqDefJ}).  In this way, we have a one to one correspondence between homogeneous almost complex structures on $G/H$ and certain linear maps at the Lie algebra level.  

By the celebrated Newlander-Nirenberg Theorem,$\mathcal{J}$ is integrable, that is, it arises from a complex structure on $G/H$ if and only if its Nijenhuis tensor vanishes: $N_{\mathcal{J}}=0$ where
$$
N_{\mathcal{J}}(X,Y):=\mathcal{J}[\mathcal{J}X,Y]+\mathcal{J}[X,\mathcal{J}Y]+[X,Y]-[\mathcal{J}X,\mathcal{J}Y],\hspace*{0.1in} \forall~X,Y\in \mathfrak{X}(G/H)
$$
With a bit of work, one can show that $N_{\mathcal{J}}=0$ if and only if $N_J=0$ where
$$
N_J(X,Y):=J[JX,Y]_{\mathfrak{m}}+J[X,JY]_{\mathfrak{m}}+[X,Y]_{\mathfrak{m}}-[JX,JY]_{\mathfrak{m}},\hspace*{0.2in}\forall~X,Y\in \mathfrak{m}.
$$
In the above expression, $[\cdot,\cdot]_{\mathfrak{m}}$ means projection onto $\mathfrak{m}$ in the reductive decomposition.  With some abuse of proper terminology, we call a linear map $J:\mathfrak{m}\rightarrow \mathfrak{m}$ a homogeneous complex structure if it induces an integrable (homogeneous) almost complex structure on $G/H$.

For the proof of Theorem \ref{thmWang}, it is more convenient to view $J:\mathfrak{m}\rightarrow \mathfrak{m}$ in terms of a certain decomposition of the complexified vector space $\mathfrak{m}_{\mathbb{C}}$.  To this end, we make the following definition:
\begin{definition}
\label{defDecompComplex}
A homogeneous complex structure for the reductive decomposition $\mathfrak{g}=\mathfrak{h}\oplus \mathfrak{m}$ is a decomposition 
$$
\mathfrak{m}_{\mathbb{C}}=\mathfrak{m}^+\oplus \mathfrak{m}^-
$$
which satisfies the following conditions:
\begin{itemize}
\item[(1)] $\mathfrak{m}^-=\overline{\mathfrak{m}^+}$ where $\overline{\mathfrak{m}^+}$  denotes the complex conjugate of $\mathfrak{m}^+$
\item[(2)] $[\mathfrak{h},\mathfrak{m}^+]\subset \mathfrak{m}^+$
\item[(3)] $[\mathfrak{m}^+,\mathfrak{m}^+]_{\mathfrak{m}_{\mathbb{C}}}\subset \mathfrak{m}^+$.
\end{itemize}
\end{definition}
\noindent It is an easy exercise to show that Definition \ref{defDecompComplex} is exactly equivalent to a homogeneous complex structure $J:\mathfrak{m}\rightarrow \mathfrak{m}$.  Actually, one can show that conditions (1) and (2) in Definition \ref{defDecompComplex} are equivalent to a homogeneous almost complex structure $J:\mathfrak{m}\rightarrow \mathfrak{m}$ and condition (3) is equivalent to the integrability condition $N_J\equiv 0$.
The precise correspondence between a homogeneous complex structure $J:\mathfrak{m}\rightarrow \mathfrak{m}$ and a decomposition $\mathfrak{m}_{\mathbb{C}}=\mathfrak{m}^+\oplus \mathfrak{m}^-$ satisfying (1)-(3) in Definition \ref{defDecompComplex} is given as follows.  Starting with a homogeneous complex structure $J$, we obtain $\mathfrak{m}^{\pm}$ via
$$
\mathfrak{m}^+=\{X-iJX~|~X\in \mathfrak{m}\},\qquad \mathfrak{m}^-=\{X+iJX~|~X\in \mathfrak{m}\}.
$$
On the other hand, starting with the decomposition $\mathfrak{m}_{\mathbb{C}}=\mathfrak{m}^+\oplus \mathfrak{m}^-$ satisfying (1)-(3), we obtain a homogeneous complex structure $J$ by first defining a linear map $J:\mathfrak{m}_{\mathbb{C}}\rightarrow \mathfrak{m}_{\mathbb{C}}$ via
$$
J\big|_{\mathfrak{m}^+}=i\cdot \mbox{id}_{\mathfrak{m}^+},\qquad J\big|_{\mathfrak{m}^-}=-i\cdot \mbox{id}_{\mathfrak{m}^-}.
$$ 
The associated homogeneous complex structure is then obtained by restricting $J$ to $\mathfrak{m}$.  Note that condition (1) of Definition \ref{defDecompComplex} ensures that $J$ is real, that is, $J\mathfrak{m}\subset \mathfrak{m}$.

\subsection{The Root Space Decomposition}
In this section, we briefly review the root space decomposition of a compact Lie group.  The reader who may be somewhat unfamiliar with this topic and would like to delve deeper is referred to the following standard references \cite{DuistermaatKolk2000, Hall2015, Helgason1978}.

Let $G$ be a compact semisimple Lie group and let $\mathfrak{t}$ be a maximal abelian Lie subalgebra of $\mathfrak{g}:=\mbox{Lie}(G)$.  Then the complexified Lie algebra $\mathfrak{g}_{\mathbb{C}}$ can always be decomposed in the following form:
$$
\mathfrak{g}_{\mathbb{C}}=\mathfrak{t}_{\mathbb{C}}\oplus \bigoplus_{\alpha\in \Delta_{\mathfrak{g}}}\mathfrak{g}_{\alpha}.
$$
In the above decomposition, an element $\alpha\in \Delta_{\mathfrak{g}}$ is called a \textit{root} and $\mathfrak{g}_\alpha$ is the corresponding \textit{root space}.  More plainly, $\alpha$ is a $\mathbb{C}$-linear map 
$$
\alpha:\mathfrak{t}_{\mathbb{C}}\rightarrow \mathbb{C}
$$
which satisfies 
$$
[X,E_\alpha]=\alpha(X)E_\alpha,\qquad \forall ~X\in \mathfrak{t},~E_\alpha\in \mathfrak{g}_\alpha.
$$
Once again, we let $B$ denote the Killing form of $\mathfrak{g}$ which is extended by $\mathbb{C}$-bilinearity to $\mathfrak{g}_{\mathbb{C}}$.  The root space decomposition satisfies the following properties:
\begin{itemize}
\item[(i)] $-\Delta_{\mathfrak{g}}=\Delta_{\mathfrak{g}}$
\item[(ii)] $\dim \mathfrak{g}_\alpha=1$ 
\item[(iii)] if $\alpha+\beta\in \Delta_{\mathfrak{g}}$, then $[\mathfrak{g}_\alpha,\mathfrak{g}_\beta]\subset \mathfrak{g}_{\alpha+\beta}$.  Otherwise, $[\mathfrak{g}_\alpha,\mathfrak{g}_\beta]=0$
\item[(iv)] $B|_{\mathfrak{g}_{\alpha}\times \mathfrak{g}_{\beta}}$ is nondegenerate for $\alpha+\beta=0$ and is zero otherwise
\item[(v)] $B(\mathfrak{t},\mathfrak{g}_\alpha)=0$
\item[(vi)] $\mathfrak{g}_{-\alpha}=\overline{\mathfrak{g}_{\alpha}}$
\item[(vii)] $\alpha|_{\mathfrak{t}}$ is imaginary valued
\end{itemize}
for all $\alpha,\beta\in \Delta_{\mathfrak{g}}$.   Also, since $B$ is negative definite for $G$ compact and semisimple, we also have that $B|_{\mathfrak{t}}$ is also negative definite and hence nondegenerate on $\mathfrak{t}$.   

If $G$ is compact and not necessarily semisimple, one still has a root space decomposition where (i)-(vii) still holds.  This follows from the fact that the Lie algebra of any compact Lie group always decomposes as 
$$
\mathfrak{g}=Z_{\mathfrak{g}}(\mathfrak{g})\oplus \mathfrak{g}',
$$
where $Z_{\mathfrak{g}}(\mathfrak{g})$ is the centralizer of $\mathfrak{g}$ in $\mathfrak{g}$ and $\mathfrak{g}':=[\mathfrak{g},\mathfrak{g}]$ can be shown to be semisimple.  The following observation is simple but important nonetheless:
\begin{proposition}
\label{propMaximalTorus}
Let $\mathfrak{t}$ be any maximal abelian Lie subalgebra of $\mathfrak{g}$.  Then
\begin{itemize}
\item[(a)] $\mathfrak{t}_{\mathfrak{g}'}:=\mathfrak{t}\cap \mathfrak{g}'$ is a maximal abelian Lie subalgebra of $\mathfrak{g}'$
\item[(b)] $\mathfrak{t}=Z_{\mathfrak{g}}(\mathfrak{g})\oplus \mathfrak{t}_{\mathfrak{g}'}$.
\end{itemize}
\end{proposition}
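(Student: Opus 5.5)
The plan is to first show that the center $\mathfrak{z}:=Z_{\mathfrak{g}}(\mathfrak{g})$ is contained in $\mathfrak{t}$, and then use the direct sum $\mathfrak{g}=\mathfrak{z}\oplus \mathfrak{g}'$ to split $\mathfrak{t}$. We rely only on the decomposition $\mathfrak{g}=Z_{\mathfrak{g}}(\mathfrak{g})\oplus \mathfrak{g}'$ recalled just before the proposition, together with the fact that $\mathfrak{g}'=[\mathfrak{g},\mathfrak{g}]$ is an ideal.

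\emph{Step 1: $\mathfrak{z}\subset \mathfrak{t}$.} Since every element of $\mathfrak{z}$ commutes with all of $\mathfrak{g}$, the subspace $\mathfrak{t}+\mathfrak{z}$ is again an abelian Lie subalgebra, and it contains $\mathfrak{t}$. Maximality of $\mathfrak{t}$ then forces $\mathfrak{t}+\mathfrak{z}=\mathfrak{t}$, that is, $\mathfrak{z}\subset\mathfrak{t}$.

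\emph{Step 2: proof of (b).} Take $X\in\mathfrak{t}$ and write $X=Z+Y$ with $Z\in\mathfrak{z}$ and $Y\in\mathfrak{g}'$, which is possible because $\mathfrak{g}=\mathfrak{z}\oplus\mathfrak{g}'$. By Step 1, $Z\in\mathfrak{t}$, so $Y=X-Z\in\mathfrak{t}\cap\mathfrak{g}'=\mathfrak{t}_{\mathfrak{g}'}$. This shows $\mathfrak{t}=\mathfrak{z}+\mathfrak{t}_{\mathfrak{g}'}$. The sum is direct because $\mathfrak{z}\cap\mathfrak{g}'=0$.

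\emph{Step 3: proof of (a).} The subspace $\mathfrak{t}_{\mathfrak{g}'}$ is an intersection of subalgebras, and it is abelian because $\mathfrak{t}$ is. For maximality, suppose $\mathfrak{a}\subset\mathfrak{g}'$ is abelian with $\mathfrak{t}_{\mathfrak{g}'}\subset\mathfrak{a}$. Then $\mathfrak{z}\oplus\mathfrak{a}$ is abelian in $\mathfrak{g}$, since $\mathfrak{z}$ is central. By (b) it contains $\mathfrak{z}\oplus\mathfrak{t}_{\mathfrak{g}'}=\mathfrak{t}$, so maximality of $\mathfrak{t}$ gives $\mathfrak{z}\oplus\mathfrak{a}=\mathfrak{t}$. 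Intersecting with $\mathfrak{g}'$ yields $\mathfrak{a}=\mathfrak{t}\cap\mathfrak{g}'=\mathfrak{t}_{\mathfrak{g}'}$, and this last equality holds because $\mathfrak{a}\subset\mathfrak{g}'$ and $\mathfrak{z}\cap\mathfrak{g}'=0$.

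There is no real obstacle here. The only point requiring care is Step 1, since the splitting in (b) and the maximality argument in (a) both depend on knowing that the center lies inside every maximal abelian subalgebra. Once that is established, the rest is bookkeeping with the direct sum $\mathfrak{g}=\mathfrak{z}\oplus\mathfrak{g}'$.
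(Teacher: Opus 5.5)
Your proof is correct and follows essentially the same route as the paper: you use maximality to put $Z_{\mathfrak{g}}(\mathfrak{g})$ inside $\mathfrak{t}$, split $\mathfrak{t}$ via $\mathfrak{g}=Z_{\mathfrak{g}}(\mathfrak{g})\oplus\mathfrak{g}'$ for (b), and for (a) enlarge an abelian extension of $\mathfrak{t}_{\mathfrak{g}'}$ by the center and invoke maximality of $\mathfrak{t}$. The only differences are cosmetic: you argue (a) directly, intersecting with $\mathfrak{g}'$, where the paper argues by contradiction, and you spell out why $\mathfrak{t}+Z_{\mathfrak{g}}(\mathfrak{g})$ is abelian where the paper calls the inclusion immediate.
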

\begin{proof}
 From the decomposition $\mathfrak{g}= Z_{\mathfrak{g}}(\mathfrak{g})\oplus \mathfrak{g}'$, we have that $ Z_{\mathfrak{g}}(\mathfrak{g})\cap \mathfrak{t}_{\mathfrak{g}'}=\{0\}$.  
 Let $\mathfrak{a}:=Z_{\mathfrak{g}}(\mathfrak{g})\oplus \mathfrak{t}_{\mathfrak{g}'}$.  Since  $Z_{\mathfrak{g}}(\mathfrak{g})$ commutes with everything in $\mathfrak{g}$ and $\mathfrak{t}$ is a maximal abelian Lie subalgebra of $\mathfrak{g}$, it follows immediately that $Z_{\mathfrak{g}}(\mathfrak{g})\subset \mathfrak{t}$.  From the definition of $\mathfrak{t}_{\mathfrak{g}'}$, we clearly have $\mathfrak{a}\subset \mathfrak{t}$.  Let $X\in \mathfrak{t}$ and decompose $X$ as $X=X_1+X_2$  where $X_1\in Z_{\mathfrak{g}}(\mathfrak{g})\subset \mathfrak{t}$ and $X_2\in \mathfrak{g}'$. Then 
 $$
 X_2=X-X_1\in \mathfrak{t}\cap \mathfrak{g}'=:\mathfrak{t}_{\mathfrak{g}'},
 $$ 
which implies that $X\in \mathfrak{a}$.  Hence, $\mathfrak{a}\supset\mathfrak{t}$ which proves (b).  For (a), note that if $\mathfrak{t}_{\mathfrak{g}'}$ is not a maximal abelian Lie subalgebra of $\mathfrak{g}'$, then we could find a strictly larger abelian Lie subalgebra $\widetilde{\mathfrak{t}}_{\mathfrak{g}'}$ of $\mathfrak{g}'$ such that $\mathfrak{t}_{\mathfrak{g}'}\subset \widetilde{\mathfrak{t}}_{\mathfrak{g}'}$.  Then 
$$
\widetilde{t}:= Z_{\mathfrak{g}}(\mathfrak{g})\oplus \widetilde{\mathfrak{t}}_{\mathfrak{g}'}
$$
is a strictly larger abelian Lie subalgebra which contains $\mathfrak{t}$, but this would contradict the maximality of $\mathfrak{t}$.  Hence, $\mathfrak{t}_{\mathfrak{g}'}$ must be a  maximal abelian Lie subalgebra of $\mathfrak{g}'$. 
\end{proof}
\noindent Consequently, if we let $\mathfrak{t}$ be any maximal abelian Lie subalgebra of $\mathfrak{g}$ and let $\mathfrak{t}_{\mathfrak{g}'}:=\mathfrak{t}\cap \mathfrak{g}'$, the root space decomposition of $\mathfrak{g}'_{\mathbb{C}}$ with respect to $\mathfrak{t}_{\mathfrak{g}'}$ gives rise to a root space decomposition for $\mathfrak{g}_{\mathbb{C}}$:
$$
\mathfrak{g}_{\mathbb{C}}=\underbrace{Z_{\mathfrak{g}}(\mathfrak{g})_{\mathbb{C}}\oplus (\mathfrak{t}_{\mathfrak{g}'})_{\mathbb{C}}}_{\mathfrak{t}_{\mathbb{C}}}\oplus \bigoplus_{\alpha\in \Delta_{\mathfrak{g}'}}\mathfrak{g}'_{\alpha}.
$$
Strictly speaking, $\alpha\in \Delta_{\mathfrak{g}'}$ is a $\mathbb{C}$-linear map $\alpha:(\mathfrak{t}_{\mathfrak{g}'})_{\mathbb{C}}\rightarrow \mathbb{C}$.  In the present context, it is understood that $\alpha$ is extended to all of $\mathfrak{t}_{\mathbb{C}}$ by defining 
\begin{equation}
\label{eqAlphaZExt}
\alpha\big|_{Z_{\mathfrak{g}}(\mathfrak{g})}\equiv 0.
\end{equation}
This must be the case since for $X\in Z_{\mathfrak{g}}(\mathfrak{g})$, we have $[X,\cdot]=0$.  So for $E_\alpha\in \mathfrak{g}_{\alpha}$, $\alpha\in \Delta_{\mathfrak{g}'}$, we have
$$
[X,E_\alpha]=\alpha(X)E_\alpha,\qquad \forall~X\in  Z_{\mathfrak{g}}(\mathfrak{g})
$$
only if one adopts (\ref{eqAlphaZExt}).  The only condition that fails for the case when $Z_{\mathfrak{g}}(\mathfrak{g})\neq \{0\}$ is that the restriction of the Killing form $B$ to $\mathfrak{t}$ fails to be nondegenerate.  

Critical to the proof of Theorem \ref{thmWang} is the choice of a system of \textit{positive roots}.  We  now recall how a system of positive roots is chosen.  A system of positive roots is a decomposition 
$$
\Delta_{\mathfrak{g}}=\Delta_{\mathfrak{g}}^+\cup \Delta_{\mathfrak{g}}^-
$$
such that 
\begin{itemize}
\item[(i)] $\Delta_{\mathfrak{g}}^-=-\Delta_{\mathfrak{g}}^+$,
\item[(ii)] if $\alpha,\beta\in \Delta_{\mathfrak{g}}^+$ and $\alpha+\beta$ is a root, then $\alpha+\beta\in \Delta^+_{\mathfrak{g}}$.
\end{itemize}
$\Delta_{\mathfrak{g}}^+$ are the set of \textit{positive roots} and $\Delta_{\mathfrak{g}}^-$ are the set of \textit{negative roots}.  Of course, one can also designate $\Delta_{\mathfrak{g}}^-$ the set of positive roots and  $\Delta_{\mathfrak{g}}^+$ the set of negative roots and conditions (i) and (ii) would still hold.  One constructs a decomposition of $\Delta_{\mathfrak{g}}$ into positive and negative roots simply by choosing an element 
$$
X_0\in \mathfrak{t}-\bigcup_{\alpha\in \Delta_{\mathfrak{g}}}\ker \alpha\neq \emptyset
$$
and then defining 
$$
\Delta^+_{\mathfrak{g}}:=\{\alpha\in \Delta_{\mathfrak{g}}~|~ -i\alpha(X_0)>0\},\qquad \Delta^-_{\mathfrak{g}}:=\{\alpha\in \Delta_{\mathfrak{g}}~|~ -i\alpha(X_0)<0\}
$$
where we recall that the restriction of $\alpha\in \Delta_{\mathfrak{g}}$ to $\mathfrak{t}\subset \mathfrak{t}_{\mathbb{C}}$ is imaginary valued.  From the above construction, it is easy to see that one has a decomposition  $\Delta_{\mathfrak{g}}=\Delta_{\mathfrak{g}}^+\cup \Delta_{\mathfrak{g}}^-$ which satisfies conditions (i) and (ii) for a system of positive roots.  As we will see shortly, the choice of $X_0\in \mathfrak{t}$ that determines the decomposition into positive and negative roots cannot be chosen arbitrarily.

\section{The Proof of Theorem \ref{thmWang}}
\label{secTheProof}
\noindent We recall the hypotheses of Theorem \ref{thmWang}.  First, $G$ is a compact semisimple Lie group and $H$ is a connected closed subgroup.  Let $\mathfrak{g}:=\mbox{Lie}(G)$ and $\mathfrak{h}:=\mbox{Lie}(H)$.   We define $\mathfrak{m}:=\mathfrak{h}^\perp$ to be the orthogonal complement of $\mathfrak{h}$ with respect to the Killing form $B$ of $\mathfrak{g}$.  This gives a reductive decomposition 
$$
\mathfrak{g}=\mathfrak{h}\oplus \mathfrak{m}.
$$
In addition, Theorem \ref{thmWang} also assumes the semisimple part of $H$ coincides with the semisimple part of the centralizer of some toral subgroup $S$ of $G$.  Let 
$$
Z_G(S):=\{g\in G|~gs=sg\}=\bigcap_{s\in S}Z_G(s)
$$
be the centralizer of $S$ in $G$.  Note that $Z_G(S)$ is a closed subgroup of $G$ and hence a compact Lie group in its own right.  Also, we have
$$
\mbox{Lie}(Z_G(S))=Z_{\mathfrak{g}}(\mathfrak{s}):=\{X\in \mathfrak{g}~|~[X,\mathfrak{s}]=0\}
$$  
At the Lie algebra level, the hypothesis of Theorem \ref{thmWang} is the condition
\begin{equation}
\label{eqThmHypothesis}
[\mathfrak{h},\mathfrak{h}]=[Z_{\mathfrak{g}}(\mathfrak{s}),Z_{\mathfrak{g}}(\mathfrak{s})].
\end{equation}
For ease of notation, we set 
$$
\mathfrak{b}:=Z_{\mathfrak{g}}(\mathfrak{s})
$$
and
$$
\mathfrak{h}':=[\mathfrak{h},\mathfrak{h}],\qquad \mathfrak{b}':=[\mathfrak{b},\mathfrak{b}].
$$
The hypothesis of Theorem \ref{thmWang} can now be written more succinctly as 
\begin{equation}
\mathfrak{h}'=\mathfrak{b}'.
\end{equation}
Let $\mathfrak{t}$ be a maximal abelian Lie subalgebra of $\mathfrak{g}$ which contains $\mathfrak{s}$.  Since $[\mathfrak{s},\mathfrak{t}]=0$, the definition of $\mathfrak{b}$ implies
\begin{equation}
\label{eqMaximalTorusInB}
\mathfrak{s}\subset \mathfrak{t}\subset \mathfrak{b}.
\end{equation}
Hence, $\mathfrak{b}$ contains a maximal abelian Lie subalgebra of $\mathfrak{g}$.  By Proposition \ref{propMaximalTorus}, 
\begin{equation}
\mathfrak{t}_{\mathfrak{h}'}:=\mathfrak{t}\cap \mathfrak{h}'=\mathfrak{t}\cap \mathfrak{b}'
\end{equation}
is a maximal abelian Lie subalgebra of $\mathfrak{b}'$ and 
\begin{equation}
\label{eqMaximalTorusDecomp}
\mathfrak{t}=Z_{\mathfrak{b}}(\mathfrak{b})\oplus \mathfrak{t}_{\mathfrak{h}'}.
\end{equation}
Taking the root space decomposition of $\mathfrak{h}'_{\mathbb{C}}=\mathfrak{b}'_{\mathbb{C}}$ with respect to $\mathfrak{t}_{\mathfrak{h}'}$, we arrive at the following decomposition for $\mathfrak{b}_{\mathbb{C}}$:
\begin{equation}
\label{eqBCdecomp}
\mathfrak{b}_{\mathbb{C}}=\underbrace{Z_{\mathfrak{b}}(\mathfrak{b})_{\mathbb{C}}\oplus (\mathfrak{t}_{\mathfrak{h}'})_{\mathbb{C}}}_{\mathfrak{t}_{\mathbb{C}}}\oplus\bigoplus_{\alpha\in \Delta_{\mathfrak{h}'}}\mathfrak{h}'_{\alpha}.
\end{equation}
Since $\mathfrak{t}$ is also a maximal abelian Lie subalgebra of $\mathfrak{g}$, the decomposition in (\ref{eqBCdecomp}) extends to the root space decomposition of $\mathfrak{g}_{\mathbb{C}}$ by including the remaining root spaces associated to $\mathfrak{t}$ in $\mathfrak{g}_{\mathbb{C}}$:
\begin{equation}
\label{eqGCdecomp}
\mathfrak{g}_{\mathbb{C}}=\underbrace{Z_{\mathfrak{b}}(\mathfrak{b})_{\mathbb{C}}\oplus (\mathfrak{t}_{\mathfrak{h}'})_{\mathbb{C}}}_{\mathfrak{t}_{\mathbb{C}}}\oplus\bigoplus_{\alpha\in \Delta_{\mathfrak{h}'}}\mathfrak{h}'_{\alpha}\oplus \bigoplus_{\alpha \in \Lambda}\mathfrak{g}_\alpha.
\end{equation}
The set of roots appearing in the root space decomposition of $\mathfrak{g}_{\mathbb{C}}$ with respect to $\mathfrak{t}$ is then
$$
\Delta_{\mathfrak{g}}:=\Delta_{\mathfrak{h}'}\cup \Lambda.
$$
\noindent We now show that one can use $\mathfrak{s}$ to construct a new abelian Lie algebra $\widetilde{\mathfrak{s}}$ which also satisfies the semisimple condition of Theorem \ref{thmWang} while also satisfying $Z_{\mathfrak{h}}(\mathfrak{h})\subset \widetilde{t}$ where $\widetilde{t}$ is a maximal abelian Lie subalgebra of $\mathfrak{g}$ which contains $\widetilde{\mathfrak{s}}$.
\begin{lemma}
\label{lemA}
$Z_{\mathfrak{b}}(\mathfrak{b})$ is a maximal abelian Lie subalgebra of $Z_{\mathfrak{g}}(\mathfrak{h}')=Z_{\mathfrak{g}}(\mathfrak{b}')$.
\end{lemma}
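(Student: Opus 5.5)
The plan has three steps: identify $Z_{\mathfrak{g}}(\mathfrak{b}')$ with $Z_{\mathfrak{g}}(\mathfrak{h}')$, show $Z_{\mathfrak{b}}(\mathfrak{b})$ sits inside it as an abelian subalgebra, and then prove maximality using the root space decomposition (\ref{eqGCdecomp}). The equality $Z_{\mathfrak{g}}(\mathfrak{h}')=Z_{\mathfrak{g}}(\mathfrak{b}')$ is immediate from the hypothesis $\mathfrak{h}'=\mathfrak{b}'$. Next, $Z_{\mathfrak{b}}(\mathfrak{b})$ is clearly abelian. It is contained in $Z_{\mathfrak{g}}(\mathfrak{b}')$ because every element of $Z_{\mathfrak{b}}(\mathfrak{b})$ commutes with all of $\mathfrak{b}$, hence with $\mathfrak{b}'\subset\mathfrak{b}$.

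For maximality, let $\mathfrak{a}$ be an abelian subalgebra with $Z_{\mathfrak{b}}(\mathfrak{b})\subset\mathfrak{a}\subset Z_{\mathfrak{g}}(\mathfrak{b}')$. It suffices to show $Z_{\mathfrak{g}}(\mathfrak{b}')\cap Z_{\mathfrak{g}}(Z_{\mathfrak{b}}(\mathfrak{b}))\subset Z_{\mathfrak{b}}(\mathfrak{b})$, since every $Y\in\mathfrak{a}$ lies in this intersection. So take such a $Y$: it commutes with $\mathfrak{b}'$ and with $Z_{\mathfrak{b}}(\mathfrak{b})$, and therefore, by (\ref{eqMaximalTorusDecomp}), it commutes with all of $\mathfrak{t}=Z_{\mathfrak{b}}(\mathfrak{b})\oplus\mathfrak{t}_{\mathfrak{h}'}$.

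The key step is to show that an element of $\mathfrak{g}$ commuting with $\mathfrak{t}$ lies in $\mathfrak{t}$. I would expand $Y$ in (\ref{eqGCdecomp}) as $Y=Y_0+\sum_{\alpha}Y_\alpha$ with $Y_0\in\mathfrak{t}_{\mathbb{C}}$. For $X\in\mathfrak{t}$, the condition $[X,Y]=0$ gives $\alpha(X)Y_\alpha=0$ for all roots $\alpha$. Since each root $\alpha$ is a nonzero functional on $\mathfrak{t}$, every $Y_\alpha$ vanishes, so $Y\in\mathfrak{t}_{\mathbb{C}}\cap\mathfrak{g}=\mathfrak{t}$. (Alternatively, one can argue directly from maximality of $\mathfrak{t}$: $\mathfrak{t}+\mathbb{R}Y$ is abelian, hence equals $\mathfrak{t}$.)

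It then remains to show that an element $Y=Z+T$ of $\mathfrak{t}$, with $Z\in Z_{\mathfrak{b}}(\mathfrak{b})$ and $T\in\mathfrak{t}_{\mathfrak{h}'}$, that also centralizes $\mathfrak{b}'$ has $T=0$. Since $Z$ centralizes $\mathfrak{b}'$, so does $T$, so $T\in\mathfrak{b}'$ is central in $\mathfrak{b}'$. Because $\mathfrak{b}'$ is semisimple, its center is zero, and hence $T=0$. (Equivalently, if $T\neq 0$ then $\alpha(T)\neq 0$ for some $\alpha\in\Delta_{\mathfrak{h}'}$, because the roots span the dual of $\mathfrak{t}_{\mathfrak{h}'}$, and this contradicts $[T,\mathfrak{h}'_\alpha]=0$.) Thus $Y\in Z_{\mathfrak{b}}(\mathfrak{b})$, which gives $\mathfrak{a}=Z_{\mathfrak{b}}(\mathfrak{b})$.

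The main obstacle is justifying the two structural facts used above: that $\mathfrak{t}$ is its own centralizer in $\mathfrak{g}$, and that $\mathfrak{b}'$ has trivial center. Both follow either from the listed root properties or from maximality together with semisimplicity of $[\mathfrak{b},\mathfrak{b}]$, which the paper asserted for compact Lie algebras.
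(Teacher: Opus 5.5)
Your proof is correct and follows essentially the same route as the paper: take an element of $Z_{\mathfrak{g}}(\mathfrak{b}')$ that centralizes $Z_{\mathfrak{b}}(\mathfrak{b})$, show it lies in $\mathfrak{b}$, split off its $\mathfrak{b}'$-component, and kill that component because $\mathfrak{b}'$ is semisimple. The only difference is how you land in $\mathfrak{b}$: you land in $\mathfrak{t}$ via $Z_{\mathfrak{g}}(\mathfrak{t})=\mathfrak{t}$ and $\mathfrak{t}=Z_{\mathfrak{b}}(\mathfrak{b})\oplus\mathfrak{t}_{\mathfrak{h}'}$, whereas the paper notes that the element centralizes all of $\mathfrak{b}=Z_{\mathfrak{b}}(\mathfrak{b})\oplus\mathfrak{b}'$ and then uses $Z_{\mathfrak{g}}(\mathfrak{b})\subset Z_{\mathfrak{g}}(\mathfrak{s})=\mathfrak{b}$, which avoids the maximal abelian subalgebra entirely.
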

\begin{proof}
Note that since $\mathfrak{s}$ is abelian and $\mathfrak{b}:=Z_{\mathfrak{g}}(\mathfrak{s})$, we have
$$
\mathfrak{s}\subset Z_{\mathfrak{b}}(\mathfrak{b})\subset Z_{\mathfrak{g}}(\mathfrak{b}').
$$
Let $X\in Z_{\mathfrak{g}}(\mathfrak{b}')$ such that $[X,Z_{\mathfrak{b}}(\mathfrak{b})]=0$.  Since $[X,\mathfrak{b}']=0$ and $\mathfrak{b}=Z_{\mathfrak{b}}(\mathfrak{b})\oplus \mathfrak{b}'$, we have $[X,\mathfrak{b}]=0$.  Hence, $X\in Z_{\mathfrak{g}}(\mathfrak{b})\subset Z_{\mathfrak{g}}(\mathfrak{s})=:\mathfrak{b}$.  Decomposing $X$ as $X=X_1+X_2$ with $X_1\in Z_{\mathfrak{b}}(\mathfrak{b})$ and $X_2\in \mathfrak{b}'$, we have
$$
[X_2,\mathfrak{b}']=[X,\mathfrak{b}']-[X_1,\mathfrak{b}']=0-0=0.
$$
Hence. $X_2\in Z_{\mathfrak{b}'}(\mathfrak{b}')=\{0\}$ since $\mathfrak{b}'$ is semisimple.  Consequently, $X=X_1\in Z_{\mathfrak{b}}(\mathfrak{b})$, which proves the lemma.
\end{proof}
 As $\mathfrak{h}'\subset \mathfrak{h}$, note that $Z_{\mathfrak{h}}(\mathfrak{h})\subset Z_{\mathfrak{g}}(\mathfrak{h}')=Z_{\mathfrak{g}}(\mathfrak{b}')$.  Now we fix a maximal abelian Lie subalgebra  $\mathfrak{u}$ of $Z_{\mathfrak{g}}(\mathfrak{b}')$ such that 
\begin{equation}
Z_{\mathfrak{h}}(\mathfrak{h})\subset \mathfrak{u}.
\end{equation}
Note that as $\mathfrak{b}'=\mathfrak{h}'$ is semisimple, there exists a connected, closed (and hence compact) subgroup $H'$ whose Lie algebra is $\mathfrak{b}'=\mathfrak{h}'$.  We define 
$$
Z_G(H'):=\{g\in G~|~gx=xg~\forall~x\in H'\}.
$$
Note that as $Z_G(H')$ is closed, $Z_G(H')$ is also a compact Lie subgroup of $G$.   With $H'$ connected, the Lie algebra of $Z_G(H')$ is then 
$$
\mbox{Lie}(Z_G(H'))=Z_{\mathfrak{g}}(\mathfrak{b}')=Z_{\mathfrak{g}}(\mathfrak{h}').
$$
\begin{lemma}
\label{lemB}
There exists $g\in Z_G(H')$ such that $\mbox{Ad}_g (Z_{\mathfrak{b}}(\mathfrak{b}))=\mathfrak{u}$.
\end{lemma}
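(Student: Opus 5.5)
This is the conjugacy of maximal tori in a compact Lie group, applied to the compact group $K:=Z_G(H')$. The difficulty is that $K$ need not be connected, whereas the standard conjugacy theorem is stated for connected groups. The plan is therefore to work inside the identity component $K_0$, which is a connected compact Lie group with Lie algebra $\mbox{Lie}(K_0)=Z_{\mathfrak{g}}(\mathfrak{b}')$.

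\textbf{Step 1: both algebras are maximal abelian in $\mbox{Lie}(K_0)$.} By Lemma \ref{lemA}, $Z_{\mathfrak{b}}(\mathfrak{b})$ is a maximal abelian Lie subalgebra of $Z_{\mathfrak{g}}(\mathfrak{b}')$. By construction, $\mathfrak{u}$ is also a maximal abelian Lie subalgebra of $Z_{\mathfrak{g}}(\mathfrak{b}')$.

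\textbf{Step 2: the corresponding subgroups are maximal tori.} Let $T_1$ and $T_2$ be the closures in $K_0$ of the connected subgroups generated by $\exp Z_{\mathfrak{b}}(\mathfrak{b})$ and $\exp \mathfrak{u}$. Each closure is a compact connected abelian group, hence a torus. Its Lie algebra is abelian and contains the original algebra. By maximality, that Lie algebra equals the original algebra. So $T_1$ and $T_2$ are closed tori with Lie algebras $Z_{\mathfrak{b}}(\mathfrak{b})$ and $\mathfrak{u}$.

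Any torus of $K_0$ containing $T_1$ has an abelian Lie algebra containing $Z_{\mathfrak{b}}(\mathfrak{b})$, so by maximality it equals $T_1$. The same holds for $T_2$. Hence both are maximal tori of $K_0$.

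\textbf{Step 3: conjugate the tori.} By the conjugacy theorem for maximal tori in a connected compact Lie group, there is $g\in K_0\subset Z_G(H')$ with $gT_1g^{-1}=T_2$. Differentiating gives $\mbox{Ad}_g(Z_{\mathfrak{b}}(\mathfrak{b}))=\mathfrak{u}$.

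\textbf{Alternative route for Step 3.} To stay closer to the paper's elementary spirit, one could instead prove the Lie-algebra version directly by the standard argument.
\begin{itemize}
\item Choose regular elements $X\in Z_{\mathfrak{b}}(\mathfrak{b})$ and $Y\in\mathfrak{u}$, so that each algebra is the centralizer of its element inside $\mathfrak{k}:=Z_{\mathfrak{g}}(\mathfrak{b}')$.
\item Fix an $\mbox{Ad}$-invariant inner product on $\mathfrak{k}$, for instance $-B$ restricted to $\mathfrak{k}$.
\item Minimize $k\mapsto -B(\mbox{Ad}_kX,Y)$ over the compact group $K_0$.
\item At a critical point $k$, one gets $B([Z,\mbox{Ad}_kX],Y)=0$ for all $Z\in\mathfrak{k}$. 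By $\mbox{ad}$-invariance this becomes $B(Z,[\mbox{Ad}_kX,Y])=0$ for all $Z$. Since $[\mbox{Ad}_kX,Y]\in\mathfrak{k}$ and $B$ is definite there, $[\mbox{Ad}_kX,Y]=0$.
\item Then $\mbox{Ad}_kX$ lies in the centralizer of $Y$, which is $\mathfrak{u}$. So $\mathfrak{u}$ is contained in the centralizer of $\mbox{Ad}_kX$, which is $\mbox{Ad}_k Z_{\mathfrak{b}}(\mathfrak{b})$.
\item Maximality of $\mathfrak{u}$ gives equality.
\end{itemize}

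\textbf{Main obstacle.} The key point is the existence of regular elements: an $X$ whose centralizer in $\mathfrak{k}$ is exactly the given maximal abelian subalgebra. This follows from the root space decomposition of $\mathfrak{k}_{\mathbb{C}}$ with respect to that subalgebra. One picks $X$ outside the finitely many root kernels, exactly as $X_0$ was chosen in Section~\ref{secBackground}. The centralizer of such an $X$ in $\mathfrak{k}_{\mathbb{C}}$ is then just the Cartan part, by properties (i)--(vii) applied to the compact algebra $\mathfrak{k}$.

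The disconnectedness of $Z_G(H')$ causes no trouble, since $g$ is produced in $K_0$.
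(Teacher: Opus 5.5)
Your proposal is correct and takes the same route as the paper, which likewise applies conjugacy of maximal tori in $Z_G(H')$ and then differentiates. The extra steps you add, working in the identity component and obtaining $T_1, T_2$ as closures of $\exp Z_{\mathfrak{b}}(\mathfrak{b})$ and $\exp\mathfrak{u}$, make explicit details the paper leaves implicit (any torus of $Z_G(H')$ lies in its identity component anyway), and your alternative critical-point argument is also valid as a self-contained Lie-algebraic proof.
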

\begin{proof}
$Z_{\mathfrak{b}}(\mathfrak{b})$ and $\mathfrak{u}$ are both maximal abelian Lie subalgebras of the Lie algebra $Z_{\mathfrak{g}}(\mathfrak{b}')=\mbox{Lie}(Z_G(H'))$.  Let $T_1$ and $T_2$ be the maximal tori of $Z_G(H')$ whose Lie algebras are $Z_{\mathfrak{b}}(\mathfrak{b})$ and $\mathfrak{u}$ respectively.  Then by maximal tori theorem, $T_1$ and $T_2$ are conjugates of one another.  Hence, for some $g\in Z_G(H')$, we have 
$$
gT_1g^{-1}=T_2.
$$
Differentiating gives  $\mbox{Ad}_g (Z_{\mathfrak{b}}(\mathfrak{b}))=\mathfrak{u}$.
\end{proof}
\begin{proposition}
\label{propC}
Let $g\in Z_G(H')$ be chosen as in Lemma \ref{lemB}.  Define the following:
\begin{itemize}
\item[(a)]$\widetilde{\mathfrak{s}}:=\mbox{Ad}_g \mathfrak{s}$
\item[(b)]  $\widetilde{\mathfrak{b}}:=Z_{\mathfrak{g}}(\widetilde{\mathfrak{s}})$
\item[(c)] $\widetilde{t}\subset \widetilde{\mathfrak{b}}$ is the maximal abelian Lie subalgebra of $\mathfrak{g}$ which contains $\widetilde{\mathfrak{s}}$
\end{itemize}
Then 
\begin{itemize}
\item[(1)] $\widetilde{\mathfrak{b}}':=[\widetilde{\mathfrak{b}},\widetilde{\mathfrak{b}}]=\mathfrak{h}'$
\item[(2)] $\widetilde{t}=\mathfrak{u}\oplus \mathfrak{t}_{\mathfrak{h}'}\supset Z_{\mathfrak{h}}(\mathfrak{h})$
\end{itemize}
\end{proposition}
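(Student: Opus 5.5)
The whole argument rests on one observation: $\mathrm{Ad}_g$ is a Lie algebra automorphism of $\mathfrak{g}$ that fixes $\mathfrak{h}'=\mathfrak{b}'$ pointwise. Indeed, $g\in Z_G(H')$ commutes with every element of the connected group $H'$, so differentiating $g\exp(tY)g^{-1}=\exp(tY)$ gives $\mathrm{Ad}_gY=Y$ for all $Y\in\mathfrak{h}'$. Both claims will then follow by transporting the structure attached to $\mathfrak{s}$ by $\mathrm{Ad}_g$.

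For (1), I would first show that $\widetilde{\mathfrak{b}}=Z_{\mathfrak{g}}(\mathrm{Ad}_g\mathfrak{s})=\mathrm{Ad}_g Z_{\mathfrak{g}}(\mathfrak{s})=\mathrm{Ad}_g\mathfrak{b}$. This holds because $[\mathrm{Ad}_gX,\mathrm{Ad}_gY]=\mathrm{Ad}_g[X,Y]$ and $\mathrm{Ad}_g$ is invertible. It then follows that $\widetilde{\mathfrak{b}}'=[\mathrm{Ad}_g\mathfrak{b},\mathrm{Ad}_g\mathfrak{b}]=\mathrm{Ad}_g\mathfrak{b}'=\mathrm{Ad}_g\mathfrak{h}'=\mathfrak{h}'$, using the pointwise fixing in the last step. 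In the same way $Z_{\widetilde{\mathfrak{b}}}(\widetilde{\mathfrak{b}})=\mathrm{Ad}_g Z_{\mathfrak{b}}(\mathfrak{b})=\mathfrak{u}$ by Lemma \ref{lemB}, which I will need for (2).

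For (2), I would avoid relying on the phrase ``the'' maximal abelian subalgebra and exhibit one directly. Set $\widetilde{t}:=\mathrm{Ad}_g\mathfrak{t}$. Since $\mathfrak{t}$ is maximal abelian in $\mathfrak{g}$ and contains $\mathfrak{s}$, the automorphism $\mathrm{Ad}_g$ makes $\widetilde{t}$ maximal abelian in $\mathfrak{g}$ with $\widetilde{\mathfrak{s}}\subset\widetilde{t}$. Also $\widetilde{t}\subset\mathrm{Ad}_g\mathfrak{b}=\widetilde{\mathfrak{b}}$. Applying $\mathrm{Ad}_g$ to the decomposition (\ref{eqMaximalTorusDecomp}), $\mathfrak{t}=Z_{\mathfrak{b}}(\mathfrak{b})\oplus\mathfrak{t}_{\mathfrak{h}'}$, gives
$$\widetilde{t}=\mathrm{Ad}_gZ_{\mathfrak{b}}(\mathfrak{b})\oplus\mathrm{Ad}_g\mathfrak{t}_{\mathfrak{h}'}=\mathfrak{u}\oplus\mathfrak{t}_{\mathfrak{h}'}.$$
Here $\mathrm{Ad}_g\mathfrak{t}_{\mathfrak{h}'}=\mathfrak{t}_{\mathfrak{h}'}$ because $\mathfrak{t}_{\mathfrak{h}'}\subset\mathfrak{h}'$ is fixed pointwise. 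The inclusion $Z_{\mathfrak{h}}(\mathfrak{h})\subset\mathfrak{u}\subset\widetilde{t}$ then holds by the choice of $\mathfrak{u}$.

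The only delicate point is the ambiguity in (c): a maximal abelian subalgebra containing $\widetilde{\mathfrak{s}}$ need not be unique unless $\widetilde{\mathfrak{s}}$ is generic. I would handle this by defining $\widetilde{t}$ as $\mathrm{Ad}_g\mathfrak{t}$, the transport of the subalgebra $\mathfrak{t}$ chosen earlier. Alternatively, one can argue from (1) and Proposition \ref{propMaximalTorus} applied to the compact algebra $\widetilde{\mathfrak{b}}$: any maximal abelian $\widetilde{t}$ with $\widetilde{\mathfrak{s}}\subset\widetilde{t}\subset\widetilde{\mathfrak{b}}$ splits as $Z_{\widetilde{\mathfrak{b}}}(\widetilde{\mathfrak{b}})\oplus(\widetilde{t}\cap\mathfrak{h}')=\mathfrak{u}\oplus(\widetilde{t}\cap\mathfrak{h}')$. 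This shows the part that matters, namely $\widetilde{t}\supset\mathfrak{u}\supset Z_{\mathfrak{h}}(\mathfrak{h})$, holds for every such choice. The equality with $\mathfrak{t}_{\mathfrak{h}'}$ specifically requires the transported choice.
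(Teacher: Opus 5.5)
Your proposal is correct and follows essentially the same route as the paper. Both arguments transport $\mathfrak{b}$, $\mathfrak{t}$ and the splitting $\mathfrak{t}=Z_{\mathfrak{b}}(\mathfrak{b})\oplus\mathfrak{t}_{\mathfrak{h}'}$ by $\mbox{Ad}_g$, using that $\mbox{Ad}_g$ fixes $\mathfrak{h}'$ pointwise. Your one refinement is to take $\widetilde{t}:=\mbox{Ad}_g\mathfrak{t}$ explicitly, and to check via Proposition \ref{propMaximalTorus} that any admissible choice still contains $\mathfrak{u}\supset Z_{\mathfrak{h}}(\mathfrak{h})$. This repairs the paper's tacit assumption, when it asserts $\widetilde{\mathfrak{t}}=\mbox{Ad}_g\mathfrak{t}$, that ``the'' maximal abelian subalgebra containing $\widetilde{\mathfrak{s}}$ is unique.
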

\begin{proof}
(1): Since $\mbox{Ad}_{g}:\mathfrak{g}\stackrel{\sim}{\rightarrow} \mathfrak{g}$ is a Lie algebra isomorphism, we have 
$$
\mbox{Ad}_g\mathfrak{b}=\widetilde{\mathfrak{b}}.
$$
Since $g\in Z_G(H')$, we have $gxg^{-1}=x$ for all $x\in H'$.  With $\mbox{Lie}(H')=\mathfrak{h}'$, it follows by differentiating that  
$$
\mbox{Ad}_g\big|_{\mathfrak{h}'}=\mbox{id}_{\mathfrak{h}'}.
$$
Hence, 
\begin{align*}
\widetilde{\mathfrak{b}}'&:=[\widetilde{\mathfrak{b}},\widetilde{\mathfrak{b}}]=[\mbox{Ad}_g\mathfrak{b},\mbox{Ad}_g\mathfrak{b}]=\mbox{Ad}_g[\mathfrak{b},\mathfrak{b}]=\mbox{Ad}_g\mathfrak{b}'=\mbox{Ad}_g\mathfrak{h}' =\mathfrak{h}'.
\end{align*}
\noindent (2): From Lemmas \ref{lemA} and \ref{lemB}, $Z_{\mathfrak{b}}(\mathfrak{b})$ and $\mathfrak{u}$ are both maximal abelian Lie subalgebras of $Z_{\mathfrak{g}}(\mathfrak{h}')$ where $\mathfrak{u}=\mbox{Ad}_gZ_{\mathfrak{b}}(\mathfrak{b})$ and $Z_{\mathfrak{h}}(\mathfrak{h})\subset \mathfrak{u}$ (from the definition of $\mathfrak{u}$).  Since $\mathfrak{t}$ and $\widetilde{\mathfrak{t}}$ are defined as the maximal abelian Lie subalgebras of $\mathfrak{g}$ which contain $\mathfrak{s}$ and $\widetilde{\mathfrak{s}}:=\mbox{Ad}_g\mathfrak{s}$ respectively, we see that 
$$
\widetilde{\mathfrak{t}}=\mbox{Ad}_g\mathfrak{t}.
$$
From (\ref{eqMaximalTorusDecomp}), we have
\begin{align*}
\widetilde{t}&=\mbox{Ad}_g Z_{\mathfrak{b}}(\mathfrak{b})\oplus \mbox{Ad}_g \mathfrak{t}_{\mathfrak{h}'}\\
&=\mathfrak{u}\oplus \mathfrak{t}_{\mathfrak{h}'}\\
&\supset Z_{\mathfrak{h}}(\mathfrak{h}).
\end{align*}
Note that if $X\in \mathfrak{u}\cap \mathfrak{t}_{\mathfrak{h}'}$, then
$$
X=\mbox{Ad}_{g^{-1}}X\in Z_{\mathfrak{b}}(\mathfrak{b}) \cap \mathfrak{h}'=\{0\}
$$
as $\mathfrak{b}=Z_{\mathfrak{b}}(\mathfrak{b})\oplus \mathfrak{b}'=Z_{\mathfrak{b}}(\mathfrak{b})\oplus \mathfrak{h}'$.  Hence, we indeed have $\mathfrak{u}\cap \mathfrak{t}_{\mathfrak{h}'}=\{0\}$.
\end{proof}
\noindent In light of Proposition \ref{propC}, we will assume from this point forth that the abelian Lie algebra $\mathfrak{s}$ from the hypothesis of Theorem \ref{thmWang} satisfies properties (1) and (2) of Proposition \ref{propC}.  Let
\begin{equation}
\mathfrak{t}_{\mathfrak{h}}:=Z_{\mathfrak{h}}(\mathfrak{h})\oplus \mathfrak{t}_{\mathfrak{h}'}\subset \mathfrak{t}.
\end{equation}
Define 
\begin{equation}
\mathfrak{a}:=\{X\in \mathfrak{t}~|~B(X,\mathfrak{t}_{\mathfrak{h}})=0\}.
\end{equation}
Then
\begin{equation}
\label{eqFinalT}
\mathfrak{t}=\mathfrak{a}\oplus \mathfrak{t}_{\mathfrak{h}}.
\end{equation}
The root space decomposition of $\mathfrak{g}_{\mathbb{C}}$ with respect to $\mathfrak{t}$ becomes
\begin{equation}
\label{eqGCdecomp2}
\mathfrak{g}_{\mathbb{C}}=\mathfrak{a}_{\mathbb{C}}\oplus \underbrace{Z_{\mathfrak{h}}(\mathfrak{h})_{\mathbb{C}}\oplus (\mathfrak{t}_{\mathfrak{h}'})_{\mathbb{C}}\oplus\bigoplus_{\alpha\in \Delta_{\mathfrak{h}'}}\mathfrak{h}'_{\alpha}}_{\mathfrak{h}_{\mathbb{C}}}\oplus \bigoplus_{\alpha \in \Lambda}\mathfrak{g}_\alpha,
\end{equation}
where we note that 
\begin{equation}
\label{eqFinalB}
\mathfrak{b}_{\mathbb{C}}=\mathfrak{a}_{\mathbb{C}}\oplus \mathfrak{h}_{\mathbb{C}}.
\end{equation}
From the definition of $\mathfrak{a}$, we immediately have
$$
B(\mathfrak{a},Z_{\mathfrak{h}}(\mathfrak{h}))=B(\mathfrak{a},\mathfrak{t}_{\mathfrak{h}'})=0.
$$
From (\ref{eqFinalT}) and the properties of the root space decomposition, we have
$$
B(\mathfrak{a},\mathfrak{h}_\alpha')=0,\qquad B(\mathfrak{g}_\beta,Z_{\mathfrak{h}}(\mathfrak{h}))=B(\mathfrak{g}_\beta,\mathfrak{t}_{\mathfrak{h}'})=0\qquad \forall~\alpha\in \Delta_{\mathfrak{h}'},~\beta\in \Lambda.
$$
Since  $\alpha+\beta\neq 0$ for all $\alpha\in \Delta_{\mathfrak{h}'},~\beta\in \Lambda$ (where again $\Delta_{\mathfrak{g}}=\Delta_{\mathfrak{h}'}\cup \Lambda$), we also have
$$
B(\mathfrak{h}'_{\alpha},\mathfrak{g}_\beta)=0\qquad \forall ~\alpha\in \Delta_{\mathfrak{h}'},~\beta\in \Lambda.
$$
From this, we conclude that
\begin{equation}
\label{eqFinalM}
\mathfrak{m}_{\mathbb{C}}:=({\mathfrak{h}}^\perp)_{\mathbb{C}}=\mathfrak{a}_{\mathbb{C}}\oplus  \bigoplus_{\alpha \in \Lambda}\mathfrak{g}_\alpha.
\end{equation}
To arrive at a decomposition 
$$
\mathfrak{m}_{\mathbb{C}}=\mathfrak{m}^+\oplus \mathfrak{m}^-
$$
which is a homogeneous complex structure for the reductive decomposition $\mathfrak{g}=\mathfrak{h}\oplus \mathfrak{m}$ (see Definition \ref{defDecompComplex}), we need to choose a suitable system of positive roots for $\Delta_{\mathfrak{g}}=\Delta_{\mathfrak{h}'}\cup \Lambda$.  First, we give a simple (yet important characterization) of the roots which lie in $\Delta_{\mathfrak{h}'}$.  
\begin{proposition}
\label{propRootResult}
Let $\alpha\in \Delta_{\mathfrak{g}}$.  Then 
\begin{itemize}
\item[(1)] $\alpha\in \Delta_{\mathfrak{h}'}$ if and only if $\alpha(\mathfrak{s})=0$. 
\item[(2)] $\alpha\in \Lambda$ if and only if $\alpha(X)\neq 0$ for some $X\in \mathfrak{s}$. 
\end{itemize}
\end{proposition}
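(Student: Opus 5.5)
Since $\Delta_{\mathfrak{g}}=\Delta_{\mathfrak{h}'}\cup\Lambda$ is a disjoint union (the root spaces in (\ref{eqGCdecomp}) are distinct), statement (2) is just the contrapositive of (1). So the plan is to prove (1) by comparing the root space decomposition (\ref{eqGCdecomp}) with the description $\mathfrak{b}_{\mathbb{C}}=Z_{\mathfrak{g}}(\mathfrak{s})_{\mathbb{C}}$. Two things are at stake: the root spaces $\mathfrak{g}_\alpha$ with $\alpha(\mathfrak{s})=0$ must be exactly those lying in $\mathfrak{b}_{\mathbb{C}}$, and the root spaces of $\mathfrak{g}_{\mathbb{C}}$ contained in $\mathfrak{b}_{\mathbb{C}}$ must be exactly the $\mathfrak{h}'_\alpha$, $\alpha\in\Delta_{\mathfrak{h}'}$.

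For ($\Rightarrow$), take $\alpha\in\Delta_{\mathfrak{h}'}$ and $0\neq E_\alpha\in\mathfrak{h}'_\alpha=\mathfrak{b}'_\alpha\subset\mathfrak{b}_{\mathbb{C}}$. Since $\mathfrak{b}=Z_{\mathfrak{g}}(\mathfrak{s})$, complexifying gives $[X,E_\alpha]=0$ for all $X\in\mathfrak{s}$. Because $\mathfrak{s}\subset\mathfrak{t}$, we also have $[X,E_\alpha]=\alpha(X)E_\alpha$, so $\alpha(X)=0$ for all $X\in\mathfrak{s}$.

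For ($\Leftarrow$), suppose $\alpha(\mathfrak{s})=0$ and take $0\neq E_\alpha\in\mathfrak{g}_\alpha$. Then $[X,E_\alpha]=0$ for every $X\in\mathfrak{s}$, and hence for every $X\in\mathfrak{s}_{\mathbb{C}}$ by linearity. Writing $E_\alpha=A+iC$ with $A,C\in\mathfrak{g}$ and using that $\mathfrak{s}$ is real, we get $[X,A]=[X,C]=0$ for $X\in\mathfrak{s}$. Thus $A,C\in\mathfrak{b}$, so $E_\alpha\in\mathfrak{b}_{\mathbb{C}}$. Now use (\ref{eqBCdecomp}): $\mathfrak{b}_{\mathbb{C}}=\mathfrak{t}_{\mathbb{C}}\oplus\bigoplus_{\beta\in\Delta_{\mathfrak{h}'}}\mathfrak{h}'_\beta$, and this is a decomposition into $\mathrm{ad}(\mathfrak{t})$-weight spaces. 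Comparing with the full decomposition (\ref{eqGCdecomp}), which is also a decomposition into simultaneous eigenspaces of $\mathrm{ad}(\mathfrak{t}_{\mathbb{C}})$ with distinct weights, the $\alpha$-weight space of $\mathfrak{g}_{\mathbb{C}}$ is exactly $\mathfrak{g}_\alpha$. Since $E_\alpha\in\mathfrak{b}_{\mathbb{C}}$, we can project $E_\alpha$ onto the summands of (\ref{eqBCdecomp}). By uniqueness of weight decompositions, $E_\alpha$ must lie in a summand of $\mathfrak{b}_{\mathbb{C}}$ of weight $\alpha$. It cannot lie in $\mathfrak{t}_{\mathbb{C}}$, which has weight $0\neq\alpha$, so it lies in some $\mathfrak{h}'_\beta$ with $\beta=\alpha$, i.e.\ $\alpha\in\Delta_{\mathfrak{h}'}$.

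The one point to handle carefully is that the roots $\beta\in\Delta_{\mathfrak{h}'}$, which are originally functionals on $(\mathfrak{t}_{\mathfrak{h}'})_{\mathbb{C}}$, have been extended to $\mathfrak{t}_{\mathbb{C}}$ by zero on $Z_{\mathfrak{b}}(\mathfrak{b})$ as in (\ref{eqAlphaZExt}). This extension agrees with the genuine $\mathrm{ad}(\mathfrak{t})$-weight because $Z_{\mathfrak{b}}(\mathfrak{b})$ commutes with $\mathfrak{b}\supset\mathfrak{h}'$. So the weights in (\ref{eqBCdecomp}) and (\ref{eqGCdecomp}) are computed with respect to the same $\mathfrak{t}$, and distinct summands of (\ref{eqGCdecomp}) have distinct weights. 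That consistency of weight labels is the main thing I would check explicitly; the rest is routine.
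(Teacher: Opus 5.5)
Your proof is correct and follows the paper's argument. The forward direction is identical. For the converse, the paper likewise shows that $[\mathfrak{s},\mathfrak{g}_\alpha]=0$ forces $\mathfrak{g}_\alpha\subset\mathfrak{b}_{\mathbb{C}}$, but it then argues by contradiction (assuming $\alpha\in\Lambda$) from the direct-sum decomposition of $\mathfrak{g}_{\mathbb{C}}$, whereas you conclude directly by weight-space uniqueness inside $\mathfrak{b}_{\mathbb{C}}$; your explicit checks of the real/imaginary-part reduction and of the consistency of the roots extended by zero on $Z_{\mathfrak{b}}(\mathfrak{b})$ fill in details the paper leaves implicit.
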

\begin{proof}
(1): Let $\alpha\in \Delta_{\mathfrak{h}'}$ and let $X\in \mathfrak{s}$ and $E_\alpha\in \mathfrak{h}'_\alpha$.  Since 
$$
\mathfrak{h}'_\alpha\subset \mathfrak{b}_{\mathbb{C}}:=Z_{\mathfrak{g}}(\mathfrak{s}),
$$
we have
$$
0=[X,E_\alpha]=\alpha(X)E_\alpha.
$$  
Hence, $\alpha(X)=0$.  

Now let $\alpha\in \Delta_{\mathfrak{g}}$ such that $\alpha(\mathfrak{s})=0$.  Suppose $\alpha\in \Lambda$.  Let $X\in \mathfrak{s}$ and $F_\alpha\in \mathfrak{g}_\alpha$.  Then
$$
[X,F_\alpha]=\alpha(X)F_\alpha=0.
$$
Hence, $[\mathfrak{s},\mathfrak{g}_\alpha]=0$ which implies $\mathfrak{g}_\alpha\subset \mathfrak{b}_{\mathbb{C}}$ which is a contradiction from (\ref{eqGCdecomp2}) and (\ref{eqFinalB}).  As $\Delta_{\mathfrak{g}}=\Delta_{\mathfrak{h}'}\cup \Lambda$, we conclude that $\alpha\in \Delta_{\mathfrak{h}'}$.
\vspace*{0.1in}\\
\noindent (2): This follows immediately from (1).
\end{proof}
\noindent We now define a decomposition of $\Delta_{\mathfrak{g}}$ into positive and negative roots as follows.  Since $\alpha(\mathfrak{s})\neq 0$ for $\alpha\in \Lambda$ by Proposition \ref{propRootResult}, we may choose an element
$$
X_s\in \mathfrak{s}-\bigcup_{\alpha\in \Lambda}\ker \alpha\neq \emptyset.
$$
Likewise, since $\alpha(\mathfrak{t}_{\mathfrak{h}'})\neq 0$ for $\alpha\in \Delta_{\mathfrak{h}'}$, we may also choose an element
$$
X_h\in \mathfrak{t}_{\mathfrak{h}'}-\bigcup_{\alpha\in \Delta_{\mathfrak{h}'}}\ker \alpha\neq \emptyset.
$$
Let
$$
X_0:=X_s+\epsilon X_h,
$$
where $\epsilon>0$.  Then
$$
\alpha(X_0)=\epsilon\alpha(X_h)\neq 0\qquad\forall~\alpha\in \Delta_{{\mathfrak{h}'}}
$$
Since $\alpha(X_s)\neq 0$ for all $\alpha\in \Lambda$, we can choose $\epsilon>0$ sufficiently small so that  
$$
\alpha(X_0)=\alpha(X_s)+\epsilon\alpha(X_h)\neq 0\qquad \forall~\alpha\in \Lambda.
$$
From this, we have $\alpha(X_0)\neq 0$ for all $\alpha\in \Delta_{\mathfrak{g}}=\Delta_{{\mathfrak{h}'}}\cup \Lambda$.  Hence, $X_0$ determines a decomposition of $\Delta_{\mathfrak{g}}$ into positive and negative roots:  
$$
\Delta^+_{\mathfrak{g}}:=\{\alpha~|~ -i\alpha(X_0)>0\},\qquad \Delta^-_{\mathfrak{g}}:=\{\alpha~|~ -i\alpha(X_0)<0\},
$$
where we recall that the roots are imaginary valued when restricted to $\mathfrak{t}\subset \mathfrak{t}_{\mathbb{C}}$.  We also set
$$
\Delta_{{\mathfrak{h}'}}^{\pm}:=\Delta^{\pm}_{\mathfrak{g}}\cap \Delta_{{\mathfrak{h}'}}\qquad \Lambda^{\pm}:=\Delta^{\pm}_{\mathfrak{g}}\cap \Lambda.
$$
We now choose $\epsilon>0$ even smaller so that the following condition is satisfied:
\begin{equation}
\label{eqEpsilonCondition}
-i\alpha(X_s)>-i\epsilon[\beta(X_h)-\alpha(X_h)]\qquad \forall~\alpha\in \Lambda^+,~\beta\in \Delta_{\mathfrak{h}'}^+
\end{equation}
The reason for this condition will become clear shortly.  
\begin{proposition}
\label{propRootSystemA}
Let $\alpha\in \Lambda^+$ and $\beta\in \Delta_{\mathfrak{h}'}^+$. 
\begin{itemize}
\item[(i)] If $\alpha+\beta$ is a root, then $\alpha+\beta\in \Lambda^+$.
\item[(ii)] If $\alpha-\beta$ is a root, then $\alpha-\beta\in \Lambda^+$.
\end{itemize}
\end{proposition}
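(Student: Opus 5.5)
The plan has two stages. First, Proposition \ref{propRootResult} shows that the sums $\alpha\pm\beta$ must lie in $\Lambda$. Second, the sign of $-i(\alpha\pm\beta)(X_0)$ is checked directly, with the choice of $\epsilon$ in (\ref{eqEpsilonCondition}) doing the work in the case of $\alpha-\beta$.

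\emph{Step 1: membership in $\Lambda$.} Let $\alpha\in\Lambda^+$ and $\beta\in\Delta_{\mathfrak{h}'}^+$, and suppose $\gamma:=\alpha\pm\beta$ is a root. By Proposition \ref{propRootResult}(1), $\beta(\mathfrak{s})=0$. Hence $\gamma(X)=\alpha(X)$ for every $X\in\mathfrak{s}$. By Proposition \ref{propRootResult}(2), $\alpha(X)\neq 0$ for some $X\in\mathfrak{s}$, so $\gamma(X)\neq 0$ as well. Proposition \ref{propRootResult}(2), applied in the other direction, then gives $\gamma\in\Lambda$. It remains only to show that $\gamma$ is positive.

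\emph{Step 2: positivity.} We use $X_0=X_s+\epsilon X_h$ together with $\beta(X_s)=0$, which holds because $X_s\in\mathfrak{s}$.

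For (i), positivity is immediate from additivity of the positive cone:
$$-i(\alpha+\beta)(X_0)=-i\alpha(X_0)-i\beta(X_0)>0,$$
since both terms are positive. Alternatively, this is property (ii) of a positive system.

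For (ii), additivity does not help, and this is where condition (\ref{eqEpsilonCondition}) enters. Compute
$$-i(\alpha-\beta)(X_0)=-i\alpha(X_s)-i\epsilon\alpha(X_h)+i\epsilon\beta(X_h)=-i\alpha(X_s)+i\epsilon[\beta(X_h)-\alpha(X_h)].$$
By (\ref{eqEpsilonCondition}), $-i\alpha(X_s)>-i\epsilon[\beta(X_h)-\alpha(X_h)]$. Adding $i\epsilon[\beta(X_h)-\alpha(X_h)]$ to both sides shows that the expression above is strictly positive. Therefore $\alpha-\beta\in\Delta_{\mathfrak{g}}^+\cap\Lambda=\Lambda^+$.

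\emph{Main obstacle.} The only nontrivial point is the negative-root case (ii), and there the work is the bookkeeping of imaginary values. One should confirm that all the quantities $-i\alpha(X_s)$, $-i\alpha(X_h)$ and $-i\beta(X_h)$ are real, which follows from property (vii) of the root space decomposition. Once they are real, the inequalities are meaningful and the rearrangement above is legitimate.

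One further check is that (\ref{eqEpsilonCondition}) can be imposed with $\epsilon>0$. Its left side is a fixed quantity independent of $\epsilon$, and it is positive: it is the $\epsilon\to0$ limit of $-i\alpha(X_0)>0$, and it is nonzero because $X_s\notin\ker\alpha$. The right side tends to $0$ as $\epsilon\to 0$, and only finitely many pairs $(\alpha,\beta)$ are involved, so a sufficiently small $\epsilon$ satisfies all the inequalities at once.
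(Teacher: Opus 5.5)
Your proof is correct. The positivity half is the paper's own argument: additivity for (i), and for (ii) the same expansion of $-i(\alpha-\beta)(X_0)$ closed off by (\ref{eqEpsilonCondition}). You usefully make explicit the fact $\beta(X_s)=0$, which the paper uses silently.

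The genuine difference is in how you show $\alpha\pm\beta\in\Lambda$. The paper argues from reductivity: $[\mathfrak{h}'_{\pm\beta},\mathfrak{g}_\alpha]\subset\mathfrak{m}_{\mathbb{C}}$ forces this bracket into $\mathfrak{g}_{\alpha\pm\beta}$ rather than $\mathfrak{h}'_{\alpha\pm\beta}$. That locates the root $\alpha\pm\beta$ only if the bracket is nonzero. So it tacitly uses $[\mathfrak{g}_\alpha,\mathfrak{g}_\beta]=\mathfrak{g}_{\alpha+\beta}$ whenever $\alpha,\beta,\alpha+\beta$ are roots, which is stronger than the inclusion in property (iii) as the paper lists it. Your argument, via Proposition \ref{propRootResult} and $(\alpha\pm\beta)|_{\mathfrak{s}}=\alpha|_{\mathfrak{s}}$, needs only what the paper has already proved. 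The inclusion $[\mathfrak{h}'_{\pm\beta},\mathfrak{g}_\alpha]\subset\mathfrak{m}_1^+$ used later for condition (2) then follows from your conclusion together with property (iii). The paper's version, in return, presents the proposition directly as a consequence of $[\mathfrak{h},\mathfrak{m}]\subset\mathfrak{m}$.

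A small point on your closing check. Letting $\epsilon\to 0$ presumes $\Lambda^+$ is unchanged as $\epsilon$ shrinks. This holds once every $-i\gamma(X_0)$, $\gamma\in\Lambda$, has the sign of $-i\gamma(X_s)$, which is the natural meaning of the paper's first ``sufficiently small''. But then $\Lambda^+=\{\gamma\in\Lambda~|~-i\gamma(X_s)>0\}$, and your identity $-i(\alpha\pm\beta)(X_s)=-i\alpha(X_s)>0$ already gives positivity in (ii) without invoking (\ref{eqEpsilonCondition}) at all.
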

\begin{proof}
(i): Suppose $\alpha+\beta$ is a root.  Then $[\mathfrak{h}'_\beta,\mathfrak{g}_\alpha]$ lies in a root space which is either $\mathfrak{h}_{\alpha+\beta}'$ or $\mathfrak{g}_{\alpha+\beta}$ given the decomposition (\ref{eqGCdecomp2}).  However, since $\mathfrak{g}=\mathfrak{h}\oplus \mathfrak{m}$ is a reductive decomposition, we have $[\mathfrak{h},\mathfrak{m}]\subset \mathfrak{m}$ which implies that $[\mathfrak{h}'_\beta,\mathfrak{g}_\alpha]\subset \mathfrak{g}_{\alpha+\beta}$.  Moreover, since $\alpha$ and $\beta$ are positive roots, it follows that $\alpha+\beta \in \Lambda^+\subset \Lambda$.  
\vspace*{0.1in}\\
\noindent (ii): Now suppose $\alpha-\beta$ is a root.  Once again, since $[\mathfrak{h},\mathfrak{m}]\subset \mathfrak{m}$, it follows that  $[\mathfrak{h}'_{-\beta},\mathfrak{g}_\alpha]\subset \mathfrak{g}_{\alpha-\beta}$.  Then
\begin{align*}
-i(\alpha-\beta)(X_0)&=-i\alpha(X_s)-i\epsilon\alpha(X_h)+i\epsilon\beta(X_h)\\
&=-i\alpha(X_s)+i\epsilon[\beta(X_h)-\alpha(X_h)]\\
&>0
\end{align*}
by (\ref{eqEpsilonCondition}).  Hence, $\alpha-\beta\in \Lambda^+\subset \Lambda$.  
\end{proof}
\noindent At long last, we define our decomposition of $\mathfrak{m}_{\mathbb{C}}$ as follows.  First, for $\mathfrak{a}_{\mathbb{C}}$ in (\ref{eqFinalM}), choose \textit{any} decomposition of the form
\begin{equation}
\label{eqAdecomp}
\mathfrak{a}_{\mathbb{C}}=\mathfrak{a}^+\oplus \mathfrak{a}^-,\qquad \mathfrak{a}^-=\overline{\mathfrak{a}^+}.
\end{equation}
Note that from the hypothesis of Theorem \ref{thmWang}, we have that $\dim \mathfrak{m}=\dim G/H$ is even.  Since 
$$
\bigoplus_{\alpha\in \Lambda}\mathfrak{g}_\alpha
$$
is even dimensional as $\Lambda=\Lambda^+\cup \Lambda^-$, $\Lambda^-=-\Lambda^+$, and $\dim \mathfrak{g}_\alpha=1$, it follows that $\mathfrak{a}_{\mathbb{C}}$ and hence $\mathfrak{a}$ is also even dimensional. (After all, the dimension of the complexification $\mathfrak{a}_{\mathbb{C}}$ as a complex vector space is exactly the same as the dimension of $\mathfrak{a}$ as a real vector space.)  Hence, a decomposition of the form given by (\ref{eqAdecomp}) is always possible.  Now let
\begin{equation}
\mathfrak{m}^+_1:=\bigoplus_{\alpha\in \Lambda^+}\mathfrak{g}_{\alpha}\qquad \mathfrak{m}^-_1:=\bigoplus_{\alpha\in \Lambda^-}\mathfrak{g}_{\alpha}.
\end{equation}
Then the homogeneous complex structure for the reductive decomposition $\mathfrak{g}=\mathfrak{h}\oplus \mathfrak{m}$ is given by $\mathfrak{m}_{\mathbb{C}}=\mathfrak{m}^+\oplus \mathfrak{m}^-$ where 
\begin{equation}
\label{eqFinalMdecompA}
\mathfrak{m}^{\pm}=\mathfrak{a}^{\pm}\oplus \mathfrak{m}^{\pm}_1.
\end{equation}
We now quickly verify conditions (1)-(3) of Definition \ref{defDecompComplex}.  Since $\mathfrak{g}_{-\alpha}=\overline{\mathfrak{g}_\alpha}$, we immediately have
$$
\mathfrak{m}^-=\overline{\mathfrak{m}^+}
$$
which verifies condition (1) of  Definition \ref{defDecompComplex}.  Since $\mathfrak{t}_{\mathfrak{h}}:=Z_{\mathfrak{h}}(\mathfrak{h})\oplus \mathfrak{t}_{\mathfrak{h}'}\subset \mathfrak{t}$ and $\mathfrak{a}\subset \mathfrak{t}$, we have 
$$
[\mathfrak{t}_{\mathfrak{h}},\mathfrak{a}]=0,\qquad [\mathfrak{t}_{\mathfrak{h}},\mathfrak{g}_\alpha]\subset \mathfrak{g}_{\alpha}.
$$
Also for $\alpha\in \Lambda^+$ and $\beta\in \Delta_{\mathfrak{h}'}^+$, Proposition \ref{propRootSystemA} implies
$$
[\mathfrak{h}_{\beta}',\mathfrak{g}_\alpha]\subset \mathfrak{m}_1^+\subset \mathfrak{m}
$$
and
$$
[\mathfrak{h}_{-\beta}',\mathfrak{g}_\alpha]\subset \mathfrak{m}_1^+\subset \mathfrak{m}.
$$
This implies $[\mathfrak{h},\mathfrak{m}^+]\subset \mathfrak{m}^+$ which confirms condition (2) of Definition \ref{defDecompComplex}.  Lastly, since $\mathfrak{a}^+\subset \mathfrak{a}_{\mathbb{C}}\subset \mathfrak{t}_{\mathbb{C}}$, observe that
$$
[\mathfrak{a}^+,\mathfrak{a}^+]=0,\qquad [\mathfrak{a}^+,\mathfrak{m}^+_1]\subset \mathfrak{m}^+_1.
$$
Also, since $\mathfrak{m}^+_1$ is spanned by the positive root spaces corresponding to $\Lambda^+$, it follows that $[\mathfrak{m}^+_1,\mathfrak{m}^+_1]$ must lie in the span of the positive root space of $\mathfrak{g}_{\mathbb{C}}$: 
$$
[\mathfrak{m}^+_1,\mathfrak{m}^+_1]\subset \mathfrak{m}^+_1\oplus \bigoplus_{\alpha\in \Delta_{\mathfrak{h}'}^+}\mathfrak{h}'_\alpha.
$$
In particular, we have
$$
[\mathfrak{m}^+_1,\mathfrak{m}^+_1]_{\mathfrak{m}_{\mathbb{C}}}\subset \mathfrak{m}^+_1.
$$
From the above considerations, it follows that 
$$
[\mathfrak{m}^+,\mathfrak{m}^+]_{\mathfrak{m}_{\mathbb{C}}}\subset \mathfrak{m}^+,
$$
which confirms condition (3) of Definition \ref{defDecompComplex} and completes the proof.
\vspace*{0.1in}\\
\noindent  As a nice side remark, we recall a result of Borel in \cite{Borel1953}:
\begin{proposition}[Borel, 1953]
\label{propBorel}
Let $G$ be a compact Lie group (not necessarily semisimple) and let $T$ be a maximal torus of $G$, then $G/T$ always admits a homogenous complex structure.
\end{proposition}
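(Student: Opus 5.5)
The plan is to deduce Proposition \ref{propBorel} from the construction of Section \ref{secTheProof}, or more directly to write down the complex structure by hand. There is one subtlety: here $G$ need not be semisimple, so the Killing form is only negative semidefinite. I would therefore work with an $\mathrm{Ad}$-invariant inner product on $\mathfrak{g}$, which exists because $G$ is compact (average any inner product over $G$). Write $\mathfrak{g}=Z_{\mathfrak{g}}(\mathfrak{g})\oplus\mathfrak{g}'$ and let $\mathfrak{t}=\mathrm{Lie}(T)$. This $\mathfrak{t}$ is a maximal abelian subalgebra of $\mathfrak{g}$, and by Proposition \ref{propMaximalTorus} it splits as $\mathfrak{t}=Z_{\mathfrak{g}}(\mathfrak{g})\oplus\mathfrak{t}_{\mathfrak{g}'}$. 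The root space decomposition then gives
$$
\mathfrak{g}_{\mathbb{C}}=\mathfrak{t}_{\mathbb{C}}\oplus\bigoplus_{\alpha\in\Delta_{\mathfrak{g}'}}\mathfrak{g}'_\alpha .
$$
Since $\mathfrak{t}$ is abelian, $\mathfrak{h}=\mathfrak{t}$ has $[\mathfrak{h},\mathfrak{h}]=0$.

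The first step is to identify $\mathfrak{m}$. Take $\mathfrak{m}$ to be the orthogonal complement of $\mathfrak{t}$ with respect to the chosen invariant inner product. Invariance gives $[\mathfrak{t},\mathfrak{m}]\subset\mathfrak{m}$, exactly as in the argument for (\ref{eqReductiveDecomp}). I would check that
$$
\mathfrak{m}=\mathfrak{g}\cap\bigoplus_{\alpha}\mathfrak{g}'_\alpha .
$$
The reason is that $\mathrm{ad}_X$ for $X\in\mathfrak{t}$ is skew-adjoint for this inner product. So the root spaces, as distinct eigenspaces of the commuting skew operators $\mathrm{ad}_X$, are orthogonal under the Hermitian extension to the zero weight space $\mathfrak{t}_{\mathbb{C}}$, and $\mathfrak{t}$ is exactly the zero weight space because it is maximal abelian. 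In particular $\mathfrak{m}_{\mathbb{C}}=\bigoplus_{\alpha\in\Delta_{\mathfrak{g}'}}\mathfrak{g}'_\alpha$, which has even dimension since $\Delta_{\mathfrak{g}'}=-\Delta_{\mathfrak{g}'}$ and each root space is one dimensional. Note that $G/T$ is connected only if $G$ is. If $G$ is not connected, I would use condition (\ref{eqAdJ}) for $H=T$, which is connected, so (\ref{eqadJ}) suffices.

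Next, choose $X_0\in\mathfrak{t}$ off all the kernels $\ker\alpha$. This gives a positive system $\Delta^+$ as described in Section \ref{secBackground}. Define
$$
\mathfrak{m}^+=\bigoplus_{\alpha\in\Delta^+}\mathfrak{g}'_\alpha,\qquad \mathfrak{m}^-=\bigoplus_{\alpha\in\Delta^-}\mathfrak{g}'_\alpha ,
$$
and verify conditions (1)--(3) of Definition \ref{defDecompComplex}.
\begin{itemize}
\item[(1)] This follows from property (vi), $\mathfrak{g}_{-\alpha}=\overline{\mathfrak{g}_\alpha}$, together with $\Delta^-=-\Delta^+$.
\item[(2)] This holds because $[\mathfrak{t},\mathfrak{g}'_\alpha]\subset\mathfrak{g}'_\alpha$.
\item[(3)] By property (iii), $[\mathfrak{g}'_\alpha,\mathfrak{g}'_\beta]\subset\mathfrak{g}'_{\alpha+\beta}$ when $\alpha+\beta$ is a root, and this lies in $\mathfrak{m}^+$ by closure of $\Delta^+$ under addition. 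Otherwise $\alpha+\beta$ is not a root and is nonzero, since $-i(\alpha+\beta)(X_0)>0$, so the bracket vanishes. Hence $[\mathfrak{m}^+,\mathfrak{m}^+]\subset\mathfrak{m}^+$, even before projecting.
\end{itemize}

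The only real point of care is the non-semisimple setting: replacing $B$ by an invariant inner product, and confirming that $\mathfrak{m}_{\mathbb{C}}$ is precisely the sum of the root spaces. Once that is in place, the integrability check is easier than in the proof of Theorem \ref{thmWang}, because no $\mathfrak{h}'$-roots and no $\mathfrak{a}$-part appear. This is consistent with the special case $\mathfrak{s}=\mathfrak{t}$ of that proof.
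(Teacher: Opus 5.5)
Your proof is correct and follows the paper's argument. In both, $\mathfrak{m}^{\pm}$ is the sum of the positive/negative root spaces, and conditions (1)--(3) of Definition \ref{defDecompComplex} are checked the same way; in (3) you are slightly more careful than the paper, since you note $\alpha+\beta\neq 0$. The only difference is how you get $\mathfrak{m}$: you take the orthogonal complement of $\mathfrak{t}$ for an $\mathrm{Ad}$-invariant inner product and show it is the real span of the root spaces, while the paper sets $\mathfrak{m}:=\mathfrak{Re}\bigl(\bigoplus_{\alpha}\mathfrak{g}_\alpha\bigr)$ directly and gets reductivity from $[\mathfrak{t},\mathfrak{g}_\alpha]\subset\mathfrak{g}_\alpha$, so it never needs the inner product to get around the degenerate Killing form.
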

\begin{proof}
Let $\mathfrak{t}:=\mbox{Lie}(T)$ and $\mathfrak{g}:=\mbox{Lie}(G)$.  Let $\Delta$ denote the roots associated to $\mathfrak{t}$ and choose any decomposition of $\Delta$ into positive and negative roots: $\Delta=\Delta^+\cup \Delta^-$. We take the root space decomposition of $\mathfrak{g}_{\mathbb{C}}$ with respect to $\mathfrak{t}$:
$$
\mathfrak{g}_{\mathbb{C}}=\mathfrak{t}_{\mathbb{C}}\oplus \bigoplus_{\alpha\in \Delta^+}\mathfrak{g}_\alpha\oplus \bigoplus_{\alpha\in \Delta^-}\mathfrak{g}_\alpha.
$$
Let 
$$
\mathfrak{m}^{\pm}:=\bigoplus_{\alpha\in \Delta^{\pm}}\mathfrak{g}_\alpha
$$
and define
$$
\mathfrak{m}_{\mathbb{C}}=\mathfrak{m}^+\oplus \mathfrak{m}^-.
$$
Since $\mathfrak{m}^-=\overline{\mathfrak{m}^+}$ by construction, it follows that $\mathfrak{m}_{\mathbb{C}}$ is given by the complexification of an underlying real space.  Define
$$
\mathfrak{m}:=\mathfrak{Re}\left(\mathfrak{m}_{\mathbb{C}}\right).
$$
Then  
$$
\mathfrak{g}=\mathfrak{t}\oplus \mathfrak{m}.
$$
Since $[\mathfrak{t},\mathfrak{m}_{\mathbb{C}}]\subset \mathfrak{m}_{\mathbb{C}}$ as $\mathfrak{m}_{\mathbb{C}}$ is spanned by the root spaces, we also have $[\mathfrak{t},\mathfrak{m}]\subset \mathfrak{m}$.  Hence, the above decomposition is reductive.  

Now, as the Lie algebra of the isotropy group is the maximal abelian Lie algebra $\mathfrak{t}$, condition (2) of Definition \ref{defDecompComplex} is immediate: $[\mathfrak{t},\mathfrak{m}^+]\subset \mathfrak{m}^+$.  Also, as $\mathfrak{m}^+$ is spanned by \textit{all} the positive root spaces, we also immediately have 
$$
[\mathfrak{m}^+,\mathfrak{m}^+]_{\mathfrak{m}_{\mathbb{C}}}=[\mathfrak{m}^+,\mathfrak{m}^+]\subset \mathfrak{m}^+.
$$
Hence, condition (3) of Definition \ref{defDecompComplex} is certainly satisfied and we conclude that $\mathfrak{m}^{\pm}$ determines a homogeneous complex structure for the reductive decomposition $\mathfrak{g}=\mathfrak{t}\oplus \mathfrak{m}$.
\end{proof}
\begin{remark}
The complex homogeneous manifold $G/T$ of Proposition \ref{propBorel} is also special in another regard.  If $G$ is also semisimple (in addition to being compact), then Borel showed in \cite{Borel1954} that the above homogeneous complex structure on $G/T$ can be equipped with a family of homogeneous K\"{a}hler metrics.
\end{remark}


\begin{thebibliography}{99}

\bibitem{Arvanitoyeorgos2003}
A. Arvanitoyeorgos, \textit{An Introduction to Lie Groups and the Geometry 
of Homogeneous Spaces}, Student Mathematical Library, vol. 22, American 
Mathematical Society, Providence, RI, 2003.

\bibitem{Borel1953}
A. Borel, \textit{Sur la cohomologie des espaces fibr\'es principaux 
et des espaces homog\`enes de groupes de Lie compacts}, Ann. of Math. 
(2) 57 (1953), 115--207.

\bibitem{Borel1954}
A. Borel, \textit{K\"ahlerian coset spaces of semisimple Lie groups}, 
Proc. Nat. Acad. Sci. U.S.A. 40 (1954), 1147--1151.

\bibitem{BorelHirzebruch1958}
A. Borel, F. Hirzebruch, \textit{Characteristic classes and homogeneous 
spaces, I}, Amer. J. Math. 80 (1958), 458--538.

\bibitem{DuistermaatKolk2000}
J. J. Duistermaat, J. A. C. Kolk, \textit{Lie Groups}, Universitext, 
Springer-Verlag, Berlin, 2000.

\bibitem{Hall2015}
B. C. Hall, \textit{Lie Groups, Lie Algebras, and Representations: 
An Elementary Introduction}, Second Edition, Graduate Texts in 
Mathematics, vol. 222, Springer, Cham, 2015.

\bibitem{Helgason1978}
S. Helgason, \textit{Differential Geometry, Lie Groups, and Symmetric 
Spaces}, Academic Press, New York, 1978.

\bibitem{Huybrechts2005}
D. Huybrechts, \textit{Complex Geometry: An Introduction}, Universitext, 
Springer-Verlag, Berlin, 2005.

\bibitem{KobayashiNomizu1963}
S. Kobayashi, K. Nomizu, \textit{Foundations of Differential Geometry, 
Volume I}, Interscience Publishers, New York, 1963.

\bibitem{KobayashiNomizu1969}
S. Kobayashi, K. Nomizu, \textit{Foundations of Differential Geometry, 
Volume II}, Interscience Publishers, New York, 1969. 
(Chapters IX--X: Complex manifolds; Chapter X: Homogeneous spaces.)

\bibitem{Lee2013}
J. M. Lee, \textit{Introduction to Smooth Manifolds}, Second Edition, 
Graduate Texts in Mathematics, vol. 218, Springer, New York, 2013.

\bibitem{NiWallach2025}
L. Ni, N. Wallach, \textit{Compact, connected, complex manifolds that 
admit a compact transitive group of holomorphic automorphisms}, 
Acta Math. Sinica, English Series (2025), 1--15.

\bibitem{Nomizu1954}
K. Nomizu, \textit{Invariant affine connections on homogeneous spaces}, 
Amer. J. Math. 76 (1954), 33--65.

\bibitem{Samelson1953}
H. Samelson, \textit{A class of complex-analytic manifolds}, 
Portugal. Math. 12 (1953), 129--132.

\bibitem{Voisin2002}
C. Voisin, \textit{Hodge Theory and Complex Algebraic Geometry, I}, 
Cambridge Studies in Advanced Mathematics, vol. 76, Cambridge 
University Press, Cambridge, 2002.

\bibitem{Wang1954}
H.-C. Wang, \textit{Closed manifolds with homogeneous complex structure}, 
Amer. J. Math. 76 (1954), 1--32.







%
%




\end{thebibliography}
\end{document}